\documentclass[11pt,reqno,oneside]{amsproc}
\title[Vorticity formulation for 3D Navier-Stokes equations]{On Green's function of the vorticity formulation for the 3D Navier-Stokes equations}

\author[I.~Kukavica]{Igor Kukavica}
\address{Department of Mathematics, University of Southern California, Los Angeles, CA 90089}
\email{kukavica@usc.edu}

\author[F.~Wang]{Fei Wang}
\address{School of Mathematical Sciences, CMA-Shanghai, Shanghai Jiao Tong University, 
  Shanghai 200240,China}
\email{fwang256@sjtu.edu.cn}

\author[Y.~Zhu]{Yichun Zhu}
\address{Academy of Mathematics and Systems Science, Chinese Academy of Sciences, Beijing 100190, China}
\email{steven00931002@hotmail.com}


\usepackage{fancyhdr}
\usepackage{comment}

  \chardef\forshowkeys=0
  \chardef\showllabel=0
  \chardef\refcheck=0
  \chardef\sketches=0
  \chardef\showcolors=0
  
\usepackage{marginnote}
\usepackage[margin=1in]{geometry}
\usepackage{amsmath, amsthm, amssymb}
\usepackage{times}
\usepackage{graphicx}
\usepackage[usenames,dvipsnames,svgnames,table]{xcolor}
\usepackage[colorlinks=true, pdfstartview=FitV, linkcolor=black, citecolor=black, urlcolor=black]{hyperref}

\ifnum\showcolors=1
  \def\colr{\color{red}}
  
  \def\colb{\color{black}}
  
  \definecolor{colorgggg}{rgb}{0.1,0.5,0.3}
  \definecolor{colorllll}{rgb}{0.0,0.7,0.0}
  \definecolor{colorhhhh}{rgb}{0.3,0.75,0.4}
  \definecolor{colorpppp}{rgb}{0.7,0.0,0.2}
  \definecolor{coloroooo}{rgb}{0.45,0.0,0.0}
  \definecolor{colorqqqq}{rgb}{0.1,0.7,0}

  \def\cole{\color{coloroooo}}
  
  \def\colu{\color{blue}}

  \definecolor{coloraaaa}{rgb}{0.6,0.6,0.6}
  
  \definecolor{mygray}{rgb}{.6,.6,.6}
\else
  \def\colr{\color{black}}
  
  \def\colb{\color{black}}

  \def\cole{\color{black}}
  
  \def\colu{\color{black}}

  \definecolor{mygray}{rgb}{0,0,0}
\fi

\ifnum\refcheck=1
  \usepackage{refcheck}
\fi

\ifnum\forshowkeys=1
  
  \usepackage[notref,notcite,color]{showkeys}
\fi

\begin{document}

\def\XX{X}
\def\YY{Y}
\def\ZZZ{Z}

\def\intint{\int\!\!\!\!\int}
\def\OO{\mathcal O}
\def\SS{\mathbb S}
\def\CC{\mathbb C}
\def\RR{\mathbb R}
\def\TT{\mathbb T}
\def\ZZ{\mathbb Z}
\def\z{\zeta}
\def\a{\alpha}
\def\g{\gamma}
\def\b{\beta}
\def\HH{\mathbb H}
\def\D{\mathcal D}
\def\d{\delta}
\def\RSZ{\mathcal R}
\def\LL{\mathcal L}
\def\SL{\LL^1}
\def\ZL{\LL^\infty}
\def\GG{\mathcal G}
\def\eps{\varepsilon}
\def\tt{\langle t\rangle}
\def\erf{\mathrm{Erf}}
\def\red#1{\textcolor{red}{#1}}
\def\blue#1{\textcolor{blue}{#1}}
\def\mgt#1{\textcolor{magenta}{#1}}
\def\ff{\rho}
\def\gg{G}
\def\tilde{\widetilde}
\def\sqrtnu{\sqrt{\nu}}
\def\ww{w}
\def\ft#1{#1_\xi}
\def\les{\lesssim}
\renewcommand*{\Re}{\ensuremath{\mathrm{{\mathbb R}e\,}}}
\renewcommand*{\Im}{\ensuremath{\mathrm{{\mathbb I}m\,}}}
\def\zz{\bar z}
\def\curl{\mathop{\rm curl}\nolimits}
\def\paren#1{\left( #1 \right)}
\def\diag{\text{diag}}

\newcommand{\norm}[1]{\left\|#1\right\|}
\newcommand{\nnorm}[1]{\lVert #1\rVert}
\newcommand{\abs}[1]{\left|#1\right|}
\newcommand{\NORM}[1]{|\!|\!| #1|\!|\!|}

\newtheorem{theorem}{Theorem}[section]
\newtheorem{corollary}[theorem]{Corollary}
\newtheorem{proposition}[theorem]{Proposition}
\newtheorem{lemma}[theorem]{Lemma}
\newtheorem{Hypotheses}[theorem]{Hypotheses}
\newtheorem{Hypothesis}{Hypothesis}

\theoremstyle{definition}
\newtheorem{definition}{Definition}[section]
\newtheorem{remark}[theorem]{Remark}

\ifnum\showllabel=1
 \def\llabel#1{\marginnote{\color{lightgray}\rm\small(#1)}[-0.0cm]\notag}
\else
\def\llabel#1{\notag}
\fi

\def\theequation{\thesection.\arabic{equation}}
\numberwithin{equation}{section}

\def\DeltaD{\Delta_{\text{D}}}
\def\DeltaN{\Delta_{\text{N}}}
\def\aand{\quad \text{and}\quad}
\def\re{\mathop{\rm \mathbb{R}e}\nolimits}
\def\im{\mathop{\rm \mathbb{I}m}\nolimits}
\def\Gammac{\Gamma_{\text{c}}}
\def\lec{\lesssim}
\def\DN{\mathop{\text{DN}}}
\def\ND{\mathop{\text{ND}}}
\def\inon#1{\hbox{\ \ \ \ \ \ \ }\hbox{#1}}                
\def\onon#1{\inon{on~$#1$}}
\def\inin#1{\inon{in~$#1$}}
\def\bnew{\colr}
\def\enew{\colb}
\def\bold{\colu}
\def\eold{\colb}
\def\ll{{\color{red}\ell}}
\def\ee{\epsilon_0}
\def\startnewsection#1#2{\section{#1}\label{#2}\setcounter{equation}{0}}   
\def\dist{\mathop{\rm dist}\nolimits}
\def\nnewpage{\newpage}
\def\sgn{\mathop{\rm sgn\,}\nolimits}    
\def\Tr{\mathop{\rm Tr}\nolimits}    
\def\div{\mathop{\rm div}\nolimits}  
\def\supp{\mathop{\rm supp}\nolimits}
\def\indeq{\quad{}}           
\def\period{.}                       
\def\semicolon{\,;}                  
\def\nts#1{{\colr #1 \colb}}
\def\ntsf#1{\footnote{\colr #1 \colb}}


\def\comma{ {\rm ,\qquad{}} }            
\def\commaone{ {\rm ,\quad{}} }          
\def\les{\lesssim}
\def\nts#1{{\color{red}\hbox{\bf ~#1~}}} 
\def\blackdot{{\color{red}{\hskip-.0truecm\rule[-1mm]{4mm}{4mm}\hskip.2truecm}}\hskip-.3truecm}
\def\bluedot{{\color{blue}{\hskip-.0truecm\rule[-1mm]{4mm}{4mm}\hskip.2truecm}}\hskip-.3truecm}
\def\purpledot{{\color{colorpppp}{\hskip-.0truecm\rule[-1mm]{4mm}{4mm}\hskip.2truecm}}\hskip-.3truecm}
\def\greendot{{\color{colorgggg}{\hskip-.0truecm\rule[-1mm]{4mm}{4mm}\hskip.2truecm}}\hskip-.3truecm}
\def\cyandot{{\color{cyan}{\hskip-.0truecm\rule[-1mm]{4mm}{4mm}\hskip.2truecm}}\hskip-.3truecm}
\def\reddot{{\color{red}{\hskip-.0truecm\rule[-1mm]{4mm}{4mm}\hskip.2truecm}}\hskip-.3truecm}
\def\tdot{{\color{green}{\hskip-.0truecm\rule[-.5mm]{3mm}{3mm}\hskip.2truecm}}\hskip-.1truecm}
\def\gdot{\greendot}
\def\bdot{\bluedot}
\def\pdot{\purpledot}
\def\ydot{\cyandot}
\def\rdot{\cyandot}

\def\fractext#1#2{{#1}/{#2}}
\def\ii{\hat\imath}
\def\fei#1{\textcolor{blue}{#1}}
\def\yichun#1{\textcolor{cyan}{#1}}
\def\igor#1{\textcolor{colorqqqq}{#1}}


\begin{abstract}
We give a novel vorticity formulation for the 3D Navier-Stokes equations with
Dirichlet boundary conditions. Via a resolvent argument, we obtain Green's
function and establish an upper bound, which is the 3D analog of~\cite{NN}.
Moreover, we prove similar results for the
corresponding Stokes problem with more general mixed boundary conditions.
\end{abstract}

\maketitle

\tableofcontents


\startnewsection{Introduction}{sec01}
We consider the Cauchy problem for the 3D incompressible Navier-Stokes equations 
  \begin{align} 
   &\partial_t u - \nu\Delta u + u\cdot\nabla u + \nabla p = 0 \label{EQ01}\\
   &\div u = 0 \label{EQ02}\\
   &u|_{t=0} = u_0 \label{EQ03}
  \end{align}
on the half-space
  \begin{equation}
   \mathbb H = \mathbb{T}^2\times \mathbb{R}_+
   ,
   \label{EQ04}
  \end{equation}
where $\mathbb{R}_+=[0,\infty)$, with  the {\em no-slip boundary condition} 
  \begin{align}
   &u|_{\partial \mathbb H} = 0
   \label{EQ05}.
  \end{align}
Here, $\nu>0$ is the kinematic viscosity. Formally setting $\nu = 0$ in \eqref{EQ01}--\eqref{EQ03} we arrive at the 3D incompressible Euler equations, with the so called {\em slip boundary condition} given by $u\cdot n|_{\partial \mathbb H}=0$. It is well-known that the vorticity plays an important role in studying the Navier-Stokes equations. For instance, in the domains without boundaries, the Navier-Stokes equations can be equivalently considered using the vorticity instead of the velocity. A benefit of such formulation is eliminating the pressure, which is replaced by the Bi\^ot-Savart law.  However, when considering the Navier-Stokes equations in the presence of boundaries, there is a difficulty with the boundary conditions for the vorticity, and thus the vorticity formulation is not as commonly used.  This difficulty was overcome by Anderson, who introduced in~\cite{A} the boundary conditions for the vorticity in the 2D~case.  Using Anderson's boundary conditions~\cite{A}, Maekawa in~\cite{M1} reformulated the 2D~Navier-Stokes equation using the vorticity by a semigroup approach. The reformulation allowed him to obtain a result stating that the inviscid limit holds in the half-space provided the initial vorticity is compactly supported.

The Anderson-Maekawa vorticity formulation \cite{A,M1} was further explored by Nguyen and Nguyen~\cite{NN}, who computed Green's function and obtained an upper bound for its derivatives.  This paper is the starting point for our main results. Namely, the main goal of the present paper is to obtain a new formulation of the 3D~Navier-Stokes vorticity setting (see \eqref{EQ41}--\eqref{EQ43}), derive the corresponding Green's function (see~\eqref{EQ233}), and obtain upper bounds for the Green's function (see~Theorem~\ref{T3}).  In particular, we show that the Navier-Stokes equations with no-slip boundary conditions can be written as
  \begin{align}
    &\omega_t + u\cdot\nabla\omega -\nu\Delta\omega =\omega\cdot\nabla u \label{EQ35a}\\
    &\nu(\partial_{z}+\Lambda_{\tau})\omega_\tau +\nu \nabla_\tau\Lambda_{\tau}^{-1} \nabla_\tau\cdot \omega_\tau
    \nonumber
    \\&\indeq
    =
     \partial_{z}(-\DeltaD)^{-1}(-u\cdot\nabla\omega_\tau+\omega\cdot\nabla u_\tau)
     + \nabla_\tau(-\DeltaN)^{-1}
     \paren{-u\cdot\nabla\omega_3+\omega\cdot\nabla u_z}
              \onon{\partial \mathbb{H}}
  ,
  \label{EQ36a}
  \\&
  \omega_3=0  \onon{\partial \mathbb{H}}
  ,
  \label{EQ37a}
  \end{align}
where $\tau$ indicates the tangential components;
see Section~\ref{sec02} for the definitions
of $(-\DeltaD)^{-1}$ and $(-\DeltaN)^{-1}$.
For the corresponding Stokes problem, we get a decomposition of the Green function mode by mode in Fourier frequency
  \begin{align}
	   G_{\xi}(t,y;z)= H_{\xi}(t,y;z) + R_{\xi}(t,y;z),
   \llabel{EQ113}
  \end{align}
where $H_{\xi}(t,y;z)$ is the heat kernel with homogeneous boundary condition in positive half line and the remainder term $R_{\xi}(t,y;z)$ satisfies a bound as the heat kernel with a fast decay tail (see \eqref{EQ128} and \eqref{EQ129} for more details).  After finding Green's function for the problem \eqref{EQ35a}--\eqref{EQ37a}, we obtain the upper bounds \eqref{EQ128} and \eqref{EQ129} where $G$ satisfies \eqref{EQ106}.  In 2D, such upper bounds were obtained by Nguyen and Nguyen in~\cite{NN}; the bounds proved to be extremely useful in the inviscid limit problem (\cite{BNNT,KVW3,KVW4,NN}).  The form of the upper bounds allows one to obtain the 3D analog of the result from \cite{KVW3,KVW4} on the inviscid limit for data that are analytic in a neighborhood of the boundary and Sobolev away from the boundary.

We point out that in the paper \cite{KM}, Kosaka and Maekawa obtained a vorticity boundary conditions, which agree with~\eqref{EQ36a} for the horizontal components of the vorticity; however, the condition $\partial_{3}\omega_3+\nabla'\cdot \omega'=0$ from \cite{KM} is replaced by the simple Dirichlet condition $\omega_3=0$.  Our simplification of the boundary conditions allows for a simpler treatment of the Green's function and easier derivation of the upper bounds stated in Theorem~\ref{T3}.  Besides simplifying the boundary conditions, the derivation of the conditions also appears to be new.  Instead of the explicit verification, we justify our conditions by a new uniqueness argument; see the proof of Theorem~\ref{T01} (see the argument after the proof of Lemma~\ref{L03}).  Finally, note that our last section provides an upper bound for the semigroup under general conditions on the Stokes problem.

The paper is structured as follows.  In Section~\ref{sec02}, we prove that all solutions of the Stokes system satisfy the Neumann/Dirichlet boundary conditions. Note that the basic step is the uniqueness argument presented at the end of the section.  Section~\ref{sec03} introduces the analytic semigroup using the resolvent inequality, stated in Theorem~\ref{T1}.  Section~\ref{sec04} introduces Green's function with Theorem~\ref{T2}, which is the sum of the classical heat Neumann and residual kernels.  The following statement, Theorem~\ref{T3} contains an upper bound for the spatial derivatives of the residual kernel. The proof of this statement is inspired by~\cite{NN}. The upper bounds apply to the inviscid limit problem for the half-space, which states that the inviscid limit holds locally in time for data that is analytic close to the boundary and Sobolev regular elsewhere. We do not present the details since they follow~\cite{KVW4} (see also \cite{W} for an alternative approach). In the last part of the section, we prove Duhamel's formula. Section~\ref{sec05} then contains the derivation of the resolvent and Green's function upper bounds in the case of general boundary conditions.

\startnewsection{Vorticity formulation}{sec02}

\subsection{A derivation of the vorticity formulation}
In this section, we provide the derivation of the new vorticity formulation.
Throughout the paper,
we use $\tau$ to denote the tangential component of a vector.
We denote by $(\DeltaD)^{-1}f$ the solution
$h$
to the Dirichlet problem
\begin{align*}
	-\Delta h &=f
   \inin{\mathbb{H}}
   \\
	\gamma(h)& =0,
\end{align*}
where both $f$ and $h$ are smooth and
with the necessary decay for $h$ at infinity;
we denote by $\gamma$ the trace of the function on $\partial\mathbb{H}$,
or $\mathbb{R}_+$, depending on the context.
Similarly, we denote by $(\DeltaN)^{-1}f$ the solution $h$ to the Neumann problem
\begin{align*}
	-\Delta h & = f \inin{\mathbb{H}} \\
	\gamma(\partial_{z} h) &= 0
\end{align*}
with
$\lim_{z \to \infty}h(z)=0$.
Also, we introduce the operator
  \begin{equation}
   \Phi(h)= \paren{(-\DeltaD)^{-1}h_\tau, (-\DeltaN)^{-1}h_n}
   .
   \label{EQ07}
     \end{equation}
Then we have the following lemma.
\cole
\begin{lemma}
\label{L01}
Assume that    $h\colon  \overline{\mathbb{H}} \to \mathbb{R}^3$,
 smooth
with compact support in $\overline{\mathbb{H}}$, is such that
$h=0$ on $\partial \mathbb{H}$ and $\div h=0$
in $\mathbb{H}$, then we have $\curl (\Phi(\curl h))=h$ in~$\mathbb{H}$.
\end{lemma}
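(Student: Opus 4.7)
Set $\omega=\curl h$ and $v=\Phi(\omega)$, so that by the definitions of the operators $(-\DeltaD)^{-1}$ and $(-\DeltaN)^{-1}$ we have
\begin{equation*}
-\Delta v_\tau=\omega_\tau,\quad v_\tau|_{\partial\mathbb{H}}=0,\qquad -\Delta v_3=\omega_3,\quad \partial_z v_3|_{\partial\mathbb{H}}=0,
\end{equation*}
together with the usual decay as $z\to\infty$ (and periodicity in the tangential variables). In particular $-\Delta v=\omega=\curl h$ in $\mathbb{H}$. The goal is to show that the field $w:=\curl v-h$ vanishes identically. My plan has three steps.

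\textbf{Step 1: $\div v=0$.} Applying the divergence to $-\Delta v=\curl h$ gives $-\Delta(\div v)=\div\curl h=0$, so $\div v$ is harmonic in $\mathbb{H}$. On $\partial\mathbb{H}$, the vanishing of $v_\tau$ there forces the tangential derivatives $\partial_1 v_1,\partial_2 v_2$ to be zero, while the Neumann condition on $v_3$ gives $\partial_z v_3=0$; thus $\div v|_{\partial\mathbb{H}}=0$. Combined with decay at infinity, uniqueness for the Dirichlet problem on $\mathbb{H}$ forces $\div v\equiv 0$.

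\textbf{Step 2: $w$ is curl-free and divergence-free, with $w_3|_{\partial\mathbb{H}}=0$.} Using the identity $\curl\curl=\nabla\div-\Delta$ together with Step 1,
\begin{equation*}
\curl w=\curl\curl v-\curl h=\nabla\div v-\Delta v-\curl h=\curl h-\curl h=0,
\end{equation*}
and $\div w=\div\curl v-\div h=0$. Moreover, on $\partial\mathbb{H}$,
\begin{equation*}
(\curl v)_3=\partial_1 v_2-\partial_2 v_1=0
\end{equation*}
since $v_\tau|_{\partial\mathbb{H}}=0$, while $h_3|_{\partial\mathbb{H}}=0$ by hypothesis; hence $w_3|_{\partial\mathbb{H}}=0$.

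\textbf{Step 3: $w\equiv 0$ by a mode-by-mode argument.} Since $\curl w=0$ and $\div w=0$, each Cartesian component of $w$ is harmonic in $\mathbb{H}$. Expanding in tangential Fourier series and solving the resulting ODE $(\partial_z^2-|\xi|^2)\hat w_\xi=0$ with decay as $z\to\infty$ yields, for $\xi\ne 0$, $\hat w_{j,\xi}(z)=B_j(\xi)e^{-|\xi|z}$; the curl-free and divergence-free conditions then express $B_1,B_2$ as multiples of $B_3$, and the boundary condition $\hat w_{3,\xi}(0)=0$ kills $B_3$. For the zero mode, $\curl w=0$ forces $\hat w_{1,0},\hat w_{2,0}$ to be constants, which must be zero by decay, and $\div w=0$ together with $\hat w_{3,0}(0)=0$ forces $\hat w_{3,0}=0$. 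Thus $w=0$ in $\mathbb{H}$, which is the claim.

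The only mild obstacle is ensuring that all boundary and decay hypotheses needed for the uniqueness arguments in Steps 1 and 3 are rigorously available; this is straightforward because $h$ is compactly supported, so $\omega=\curl h$ is compactly supported, and the Dirichlet/Neumann Green's functions for $-\Delta$ on $\mathbb{H}$ produce $v$ (and hence $\div v$ and $w$) decaying at infinity at the rates required.
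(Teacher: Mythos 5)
Your proof is correct, and it takes a genuinely different route from the paper's. Both proofs share the observation that $\div v=0$ (your Step~1; the paper shows exactly this for $g=\Phi(\curl h)$ via $\Delta(\div g)=0$ and $\gamma(\div g)=0$), but after that they diverge. The paper proceeds by duality: it tests $\curl(\Phi(\curl h))$ against an arbitrary smooth $f$, invokes a Helmholtz decomposition $f=\nabla\tilde\phi+\curl\phi$ on $\mathbb{H}$ with $(\curl\phi)_3=0$ on the boundary (citing McCracken), and then integrates by parts twice using $\curl\curl=\nabla\div-\Delta$ to reduce the pairing to $\int h\cdot f$. You instead form the difference field $w=\curl v-h$ directly, show it is simultaneously curl-free and divergence-free with $w_3|_{\partial\mathbb{H}}=0$, and then kill it mode-by-mode in the tangential Fourier variable using harmonicity of each component, decay in $z$, and the normal boundary condition. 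Your route avoids the Helmholtz decomposition on $\mathbb{H}$ (a nontrivial ingredient the paper outsources to a reference) and makes the uniqueness mechanism behind the Bi\^ot--Savart inversion completely explicit, at the cost of a short ODE computation for each Fourier mode; the paper's duality argument is a bit more compact and does not require separating modes. One small point worth stating explicitly if you write this up: for $\xi\ne 0$ the exponential decay of $\hat v_\xi$ (hence of $\hat w_\xi$) follows from the compact support of $\curl h$ and the Green's function of $|\xi|^2-\partial_z^2$, while for $\xi=0$ the relevant decay is that of $\curl v$ (only $z$-derivatives of $v_\tau$ enter), which vanishes for large $z$ once the source is compactly supported and $v$ is taken bounded; this is exactly the ``necessary decay'' the paper assumes when defining $(-\DeltaD)^{-1}$ and $(-\DeltaN)^{-1}$, so you are on the same footing as the authors there.
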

\colb

\begin{proof}[Proof of Lemma~\ref{L01}]
First, observe that
  \begin{equation}\label{EQ08}
   \int_{\mathbb{H}} \curl h(x) \phi(x) dx
   = \int_{\mathbb{H}} h(x) \curl \phi(x)\,dx
   ,
  \end{equation}
for all $h,\phi \in \left(C_0^{\infty}(\overline{\mathbb{H}})\right)^3$ such that $h_{\tau}=0$ on~$\partial \mathbb{H}$.
Using the Helmholtz decomposition, for 
every $f \in  \left(C_0^{\infty}(\overline{\mathbb{H}})\right)^3$
there exist $\tilde{\phi}\colon  \overline{\mathbb{H}} \to \mathbb{R}$
and $\phi\colon \overline{\mathbb{H}} \to \mathbb{R}^3$ such that $f = \nabla \tilde{\phi} + \curl \phi$
and $(\curl \phi)_3=0$ on~$\partial \mathbb{H}$;
the functions $\phi$ and $\tilde\phi$ belong to all $W^{k,p}$,
where $k\in\mathbb{N}_0$ and~$p\in(1,\infty)$; see~\cite{McC}.
Thus we have
  \begin{equation}
   \int_{\mathbb{R}^3} \curl (\Phi(\curl h))(x) \cdot f(x) \,  dx = \int_{\mathbb{R}^3} \curl (\Phi(\curl h))(x)
   \cdot
   \bigl( \nabla \tilde{\phi} + \curl (\phi) \bigr)\,  dx 
   .
   \label{EQ09}
  \end{equation}
Let $g= \Phi (\curl h)$, and note that by the definition of $\Phi$, we have $g_{\tau}=0$ on~$\partial \mathbb{H}$. Since
  \begin{equation}
   \curl (\nabla\tilde{\phi})=0
   \llabel{EQ10}
   \end{equation}
 and 
  \begin{equation}
   \curl \curl \phi= -\Delta \phi + \nabla (\div \phi),
   \label{EQ11}
  \end{equation}
we obtain, using
\eqref{EQ08}, \eqref{EQ09}, and \eqref{EQ11},
  \begin{equation}
   \int_{\mathbb{R}^3} \curl (\Phi(\curl h))(x) \cdot f(x) \, dx = \int_{\mathbb{R}^3} \left(-\Delta g(x) + \nabla(\div g)(x)\right) \cdot \phi (x)  \, dx .
   \llabel{EQ12}
   \end{equation}
By the definition of $\Phi$, we have
  \begin{equation}
   -\Delta g(x)= \curl h.
   \llabel{EQ13}
    \end{equation}
Since $h_{\tau}\colb=0$ on  $\partial \mathbb{H}$,
we may use \eqref{EQ08} again to obtain
  \begin{equation}
  \int_{\mathbb{R}^3} \curl (\Phi(\curl h))(x) \cdot f(x) \, dx
  = \int_{\mathbb{R}^3} \bigl(h(x) \cdot \curl \phi(x)+ \nabla(\div g)(x) \cdot \phi (x)\bigr)  \,dx .
   \llabel{EQ14}
   \end{equation}
On the other hand, since $\Delta(\div g)= \div(\Delta g) = -\div (\curl h)=0$ and
\begin{equation}
g_{\tau}=0
   \aand {\partial_3} g_3=0
   ,
   \llabel{EQ15}
     \end{equation}
we get $\Delta (\div g)=0$ and
$\gamma(\div g)=0$,
which implies $\div g =0$. Hence, we have
\begin{equation}
\int_{\mathbb{R}^3} \curl (\Phi(\curl h))(x) \cdot f(x) \,dx
     = \int_{\mathbb{R}^3} h(x) \cdot \curl \phi(x)  \,dx
   \llabel{EQ16}
     \end{equation}
Finally, since $\div h=0$, we have
\begin{equation}
 \int_{\mathbb{R}^3} h(x) \cdot \nabla \tilde\phi(x) dx=0.
   \llabel{EQ17}
     \end{equation}
Therefore,
using also $f = \nabla \tilde{\phi} + \curl (\phi)$,
we conclude that
  \begin{equation}
   \int_{\mathbb{R}^3} \curl (\Phi(\curl h))(x) \cdot f(x) \, dx = \int_{\mathbb{R}^3}h(x) \cdot f(x) \, dx 
   ,
   \llabel{EQ18}
     \end{equation}
which proves the result.
\end{proof}

We also need the following identities.

\cole
\begin{lemma}
	\label{L02}
	For a smooth function $f$, we have
	\begin{equation}
		\gamma( \partial_{3} \DeltaD^{-1} \Delta f)
		= \gamma(\partial_{3} f)   +  \Lambda_{\tau} (\gamma(f))
		\label{EQ24}
	\end{equation}
	and
	\begin{equation}
		\gamma( \DeltaN^{-1} \Delta f)
		= \gamma(f) + \Lambda_{\tau}^{-1}(\gamma(\partial_3 f))
		,
		\label{EQ23}
	\end{equation}
	where $\Lambda_\tau=(-\Delta_\tau)^{1/2}$ and $\gamma$ is the trace operator restricted at $z=0$.
\end{lemma}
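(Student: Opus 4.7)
The plan is to reduce each identity to an explicit harmonic extension and then solve that extension by tangential Fourier transform. Both statements have the same structure: we write $h$ as $f$ plus a harmonic corrector $H$, and compute the boundary traces of $H$ and $\partial_3 H$ from explicit Poisson-type formulas.

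For the first identity, let $h=\DeltaD^{-1}\Delta f$, so that $\Delta h=\Delta f$ in $\mathbb{H}$ with $\gamma(h)=0$. Setting $H=h-f$, the function $H$ is harmonic in $\mathbb{H}$ with the Dirichlet boundary data $\gamma(H)=-\gamma(f)$. Taking the Fourier transform $\widehat{\,\cdot\,}$ in the tangential variables and imposing decay at $z\to\infty$ gives
\begin{equation*}
\widehat{H}(\xi,z)=-e^{-|\xi|z}\widehat{\gamma(f)}(\xi),
\end{equation*}
so that $\partial_z\widehat{H}(\xi,0)=|\xi|\widehat{\gamma(f)}(\xi)$. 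Back in physical space this reads $\gamma(\partial_3 H)=\Lambda_\tau\gamma(f)$. Since $\partial_3 h=\partial_3 f+\partial_3 H$, taking traces yields \eqref{EQ24}.

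For the second identity, let $h=\DeltaN^{-1}\Delta f$, so that $\Delta h=\Delta f$ in $\mathbb{H}$ with $\gamma(\partial_3 h)=0$, and set $H=h-f$. Then $H$ is harmonic with Neumann data $\gamma(\partial_3 H)=-\gamma(\partial_3 f)$. The same tangential Fourier argument gives
\begin{equation*}
\widehat{H}(\xi,z)=\frac{e^{-|\xi|z}}{|\xi|}\,\widehat{\gamma(\partial_3 f)}(\xi),
\end{equation*}
whence $\widehat{H}(\xi,0)=|\xi|^{-1}\widehat{\gamma(\partial_3 f)}(\xi)$, that is, $\gamma(H)=\Lambda_\tau^{-1}\gamma(\partial_3 f)$ in physical space. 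Taking traces in $h=f+H$ then gives \eqref{EQ23}.

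The main obstacle, besides bookkeeping the sign convention in the definition of $\DeltaD^{-1}$ and $\DeltaN^{-1}$, is the $\xi=0$ mode in the second identity, where the symbol $|\xi|^{-1}$ is singular. This is handled by the standing smoothness and decay assumptions on $f$, which force $\widehat{\gamma(\partial_3 f)}(0)=\int_{\mathbb{T}^2}\partial_3 f(y,0)\,dy=0$ via the Gauss compatibility condition implicit in the solvability of the Neumann problem; equivalently, we interpret $\Lambda_\tau^{-1}$ on the orthogonal complement of the constants and note that all identities are stated modulo this zero mode. With this interpretation the two computations above close the proof.
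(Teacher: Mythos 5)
Your proof is correct and follows essentially the same strategy as the paper: write $\DeltaD^{-1}\Delta f$ (resp.\ $\DeltaN^{-1}\Delta f$) as $f$ plus a harmonic corrector and read off the boundary trace of the corrector; the paper invokes the Dirichlet-to-Neumann (resp.\ Neumann-to-Dirichlet) map with symbol $\Lambda_\tau$ (resp.\ $\Lambda_\tau^{-1}$), while you derive the same symbol by an explicit tangential Fourier computation. Your remark on the $\xi=0$ mode correctly flags a caveat that the paper leaves implicit in its standing decay assumptions on $f$ and $h$.
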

\colb

\begin{proof}[Proof of Lemma~\ref{L02}]
	The function
	$g=\Delta_{\text{D}}^{-1}\Delta f$ satisfies
	\begin{align}
		\begin{split}
			&\Delta g = \Delta f
			\\&
			\gamma(g)=0
			.
		\end{split}
		\llabel{EQ25}
	\end{align}
	Then
	\begin{align}
		\begin{split}
			&\Delta (g-f)=0
			\\&
			\gamma(g-f) = - \gamma(f)
			.
		\end{split}
		\llabel{EQ26}
	\end{align}
	By the definition of the Dirichlet-to-Neumann map $\DN$,
	we have
	$\gamma(\partial_{3}(g-f))
	= \DN(\gamma(f))$
	(note that the normal derivative on $\partial \mathbb{H}$ equals $-\partial_3$),
	from where we get
	\begin{align}
		\begin{split}
			\gamma(\partial_{3}g)
			&=
			\gamma(\partial_{3}f) + \DN(\gamma(f))
			=    \gamma(\partial_{3}f) + \Lambda_{\tau}(\gamma(f))
			,
		\end{split}
		\llabel{EQ27}
	\end{align}
	which gives \eqref{EQ24} since $\gamma(\partial_{3}g)$ equals
	the left-hand side in~\eqref{EQ24}.
	
	To get \eqref{EQ23}, we set
	$g=\DeltaN^{-1}\Delta f$ so that
	\begin{align}
		\begin{split}
			&\Delta g = \Delta f
			\\&
			\gamma(\partial_{3}g)=0
			.
		\end{split}
		\llabel{EQ28}
	\end{align}
	This leads to
	\begin{align}
		\begin{split}
			&\Delta (g-f)=0
			\\&
			\gamma(\partial_{3}(g-f)) = - \gamma(\partial_{3}f)
			.
		\end{split}
		\llabel{EQ29}
	\end{align}
	Denoting by $\ND=\DN^{-1}$ the Neumann-to-Dirichlet map, we have
	$\gamma((g-f))
	= \ND(\gamma(\partial_{3}f))$
	(again noting that the normal derivative and $\partial_3$ have opposite signs)
	and thus
	\begin{align}
		\begin{split}
			\gamma(g)
			&=
			\gamma(f) + \ND(\gamma(\partial_{3}f))
			=    \gamma(f) +  \Lambda_{\tau}^{-1}(\gamma(\partial_{3}f))
			,
		\end{split}
		\llabel{EQ30}
	\end{align}
	from where \eqref{EQ23} follows.
\end{proof}

\colb
Now,   we are ready to derive the vorticity formulation of the 3D Navier-Stokes equations.  
Applying the curl to the equation~\eqref{EQ01} gives
  \begin{equation}
	\omega_t + u\cdot\nabla\omega -\nu\Delta\omega
	 =\omega\cdot\nabla u
	 .
	\llabel{EQ06}
  \end{equation}
By
Lemma~\ref{L01}, the Bi\^ot-Savart law for the mixed Dirichlet-Neumann condition reads
\begin{equation}
	\label{EQ19}
	u=\curl W,
	\quad
	\text{where~}
	W= \Phi (\omega)
\end{equation}
We next obtain the boundary conditions for 
$\omega$.
For the third component of the vorticity, we have
  \begin{equation}
  \omega_3= \partial_{1}u_2-\partial_{2}u_1=0
  \onon{\partial \mathbb{H}}
   .
   \llabel{EQ20}
  \end{equation}
To get the boundary conditions for the horizontal component
$\omega_2$, first note that
  \begin{align}
    u_1 = \partial_{2} W_3 - \partial_{3} W_2
        = \partial_{2} (-\Delta_{\text{N}})^{-1} \omega_3
	    - \partial_{3} (-\Delta_{\text{D}})^{-1} \omega_2
   .
   \label{EQ21}
   \end{align}
Using \eqref{EQ01} and \eqref{EQ21}, we obtain
  \begin{align}
  \begin{split}
  -\partial_{t}u_1
  &=
   \bigl(
    \partial_{3}(-\Delta_\text{D})^{-1}\partial_{t}\omega_2
    \bigr)
    -
   \bigl(
    \partial_{2} (-\Delta_{\text{N}})^{-1} \partial_{t}\omega_3
   \bigr)
  \\&
  =
   \Bigl(
     \partial_{3}(-\Delta_D)^{-1}(-u\cdot\nabla\omega_2+\omega\cdot\nabla u_2+\nu\Delta\omega_2)
   \Bigr)
  \\&\indeq
    -
   \Bigl(
    \partial_{2}(-\Delta_N)^{-1}(-u\cdot\nabla\omega_3+\omega\cdot\nabla u_3+\nu\Delta\omega_3)
   \Bigr)
   \onon{\partial \mathbb{H}}
   .
  \end{split}
  \label{EQ22}
  \end{align}
By \eqref{EQ24}, we have
  \begin{equation}
   \gamma(   \partial_{3}(-\Delta_D)^{-1}\Delta\omega_2)
   =
   -\gamma(\partial_{3}\omega_2+\Lambda_\tau\omega_2)
  \label{EQ31}
  \end{equation}
while \eqref{EQ23} implies
\begin{equation}
\gamma(   \partial_{2}(-\Delta_N)^{-1}\Delta\omega_3)
   =
   -\gamma(\partial_{2}\omega_3 
      +\partial_2\Lambda_{\tau}^{-1}\partial_3 \omega_3).
  \label{EQ32}
  \end{equation}
Using $\gamma(u_1)=0$, along with \eqref{EQ31} and \eqref{EQ32} in \eqref{EQ22},
we get
  \begin{align}
  \begin{split}
  &
  \nu\gamma((\partial_{3}+\Lambda_\tau)\omega_2)
    - \nu \gamma(\partial_2\Lambda_{\tau}^{-1} \partial_3 \omega_3)
  \\&\indeq
  =\gamma\bigl(\partial_{3}(-\Delta_D)^{-1}(-u\cdot\nabla\omega_2+\omega\cdot\nabla u_2)\bigr)
   -\gamma\bigl(\partial_{2}(-\Delta_N)^{-1}(-u\cdot\nabla\omega_3+\omega\cdot\nabla u_3)\bigr)
  .
  \end{split}
  \label{EQ33}
  \end{align}  
Analogously,
we obtain the boundary condition for $\omega_1$ as
  \begin{align}
  \begin{split}
  &
  \nu\gamma((\partial_{3}+\Lambda_\tau)\omega_1)
    - \nu \gamma(\partial_1\Lambda_{\tau}^{-1} \partial_3 \omega_3)
  \\&\indeq
  =\gamma\bigl(\partial_{3}(-\Delta_D)^{-1}(-u\cdot\nabla\omega_1+\omega\cdot\nabla u_1)\bigr)
   -\gamma\bigl(\partial_{2}(-\Delta_N)^{-1}(-u\cdot\nabla\omega_3+\omega\cdot\nabla u_3)\bigr)
  .
  \end{split}
   \label{EQ34}
  \end{align}  
Using also the divergence-free condition 
$\partial_{3}\omega_3=-\nabla_{\tau}\cdot \omega_{\tau}$,
in \eqref{EQ33} and~\eqref{EQ34}, we obtain
the vorticity formulation for the 3D~Navier-Stokes equations with mixed boundary conditions,
  \begin{align}
    &\omega_t + u\cdot\nabla\omega -\nu\Delta\omega =\omega\cdot\nabla u \label{EQ35}\\
    &\nu(\partial_{z}+\Lambda_{\tau})\omega_\tau +\nu \nabla_\tau\Lambda_{\tau}^{-1} \nabla_\tau\cdot \omega_\tau
    \nonumber
    \\&\indeq
    =
     \partial_{z}(-\Delta_D)^{-1}(-u\cdot\nabla\omega_\tau+\omega\cdot\nabla u_\tau)
     + \nabla_\tau(-\Delta_N)^{-1}
     \paren{-u\cdot\nabla\omega_3+\omega\cdot\nabla u_z}
              \onon{\partial \mathbb{H}}
  ,
  \label{EQ36}
  \\&
  \omega_3=0  \onon{\partial \mathbb{H}}.     \label{EQ37}
  \end{align}
\colb

\subsection{Equivalence of the vorticity and velocity formulations}
Next, we show that the vorticity system obtained above is equivalent to the original Navier-Stokes system. 

\cole
\begin{theorem}
\label{main:th:2}
Consider the Navier-Stokes system \eqref{EQ01}--\eqref{EQ03}
with the initial data $u_0\in H^{2}(\mathbb{H})$
with the classical compatibility conditions.
Then the system is equivalent to \eqref{EQ35}--\eqref{EQ37}.
\end{theorem}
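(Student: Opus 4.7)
The plan is to establish the bijection $u\leftrightarrow \omega=\curl u$ between smooth solutions of \eqref{EQ01}--\eqref{EQ03} and smooth solutions of \eqref{EQ35}--\eqref{EQ37} in two directions. The forward direction is a direct computation, essentially already carried out in the derivation preceding the theorem. The reverse direction is delicate, and I would complete it using a uniqueness argument for the vorticity system coupled with the classical local well-posedness of the Navier-Stokes problem at the regularity $u_0\in H^2(\mathbb{H})$.

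For the forward direction, assume $u$ is a smooth solution of \eqref{EQ01}--\eqref{EQ03}, set $\omega=\curl u$, and take the curl of \eqref{EQ01} to obtain \eqref{EQ35}. The identity $\omega_3=\partial_1 u_2 - \partial_2 u_1$ together with $u_\tau|_{\partial\mathbb{H}}=0$ gives \eqref{EQ37}, since tangential derivatives of tangential traces vanish. For \eqref{EQ36}, differentiating $u_\tau|_{\partial\mathbb{H}}=0$ in time yields $\partial_t u_\tau|_{\partial\mathbb{H}}=0$; substituting the Bi\^ot-Savart representation \eqref{EQ19} from Lemma~\ref{L01} into the tangential components of \eqref{EQ01} restricted to $\partial\mathbb{H}$, and applying the trace identities \eqref{EQ24} and \eqref{EQ23} to the Laplacian terms, reproduces the computation \eqref{EQ22}--\eqref{EQ34} and gives \eqref{EQ36}.

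For the reverse direction, suppose $\omega$ solves \eqref{EQ35}--\eqref{EQ37} with $\omega_0=\curl u_0$. First I would verify $\div\omega\equiv 0$: taking the divergence of \eqref{EQ35} and using the pointwise identities $\div(u\cdot\nabla\omega)-u\cdot\nabla\div\omega = \partial_i u_k\partial_k\omega_i = \div(\omega\cdot\nabla u)$, one gets a homogeneous convection-diffusion equation for $\div\omega$, which vanishes given $\div\omega_0=0$ and the boundary relation $\partial_3\omega_3+\nabla_\tau\cdot\omega_\tau=0$ that is encoded in \eqref{EQ36}--\eqref{EQ37}. Once $\div\omega=0$, Lemma~\ref{L01} yields that $u:=\curl\Phi(\omega)$ satisfies $\curl u=\omega$ and $\div u=0$, and the structure of $\Phi$ forces $u_3|_{\partial\mathbb{H}}=0$. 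The pressure $p$ is then defined by the standard elliptic problem that makes $\partial_t u-\nu\Delta u + u\cdot\nabla u+\nabla p$ both curl-free and divergence-free.

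The main obstacle is the tangential no-slip condition $u_\tau|_{\partial\mathbb{H}}=0$, which is not structurally built into the Bi\^ot-Savart representation. I would handle it by uniqueness: classical theory provides a unique local-in-time smooth solution $\bar u$ of \eqref{EQ01}--\eqref{EQ03}, whose vorticity $\bar\omega=\curl\bar u$ solves \eqref{EQ35}--\eqref{EQ37} with the same initial data $\omega_0$. A uniqueness argument for the vorticity system — carried out by testing the linearized evolution equation for $\omega-\bar\omega$ against itself in a suitable energy norm, integrating by parts, and controlling the nonlocal boundary operator $(\partial_z+\Lambda_\tau)+\nabla_\tau\Lambda_\tau^{-1}\nabla_\tau\cdot$ from \eqref{EQ36} via the nonnegativity of the Dirichlet-to-Neumann map and the smoothing of $\Lambda_\tau^{-1}$ — forces $\omega=\bar\omega$ and therefore $u=\bar u$. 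The delicate technical step is closing this energy estimate: the boundary term couples tangential and normal components of $\omega$ through nonlocal operators, and the quadratic nonlinearities in the boundary forcing must be absorbed into the viscous dissipation without invoking any additional boundary condition on $\omega_3$ beyond \eqref{EQ37}, which is precisely what distinguishes the formulation here from \cite{KM}.
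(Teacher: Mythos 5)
Your forward direction matches the paper's argument, but the reverse direction diverges substantially and has two gaps.

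First, the $\div\omega=0$ verification does not close as you describe. You claim that "the boundary relation $\partial_3\omega_3+\nabla_\tau\cdot\omega_\tau=0$ is encoded in \eqref{EQ36}--\eqref{EQ37}," but it is not: the paper obtains \eqref{EQ36} from \eqref{EQ33}--\eqref{EQ34} precisely by substituting the constraint $\partial_3\omega_3=-\nabla_\tau\cdot\omega_\tau$, so that constraint has been eliminated rather than imposed by the boundary conditions. Starting from \eqref{EQ36}--\eqref{EQ37} alone there is no boundary condition on $\div\omega$, and the transport-diffusion equation $\partial_t(\div\omega)+u\cdot\nabla(\div\omega)-\nu\Delta(\div\omega)=0$ cannot be concluded to have only the zero solution without one. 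The paper avoids this issue entirely: rather than trying to show directly that $\omega$ is divergence-free, it identifies $\omega$ with $\curl\tilde u$ via a uniqueness lemma, and $\curl\tilde u$ is automatically divergence-free.

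Second, your uniqueness argument is pitched at the nonlinear Navier--Stokes level, and you yourself flag the delicate step---absorbing the quadratic boundary forcing from \eqref{EQ36} into the viscous dissipation. The paper sidesteps this difficulty with a decisive structural simplification that your proposal misses: it first proves the equivalence for the \emph{linear Stokes problem with an arbitrary given force $f$} (Theorem~\ref{T01}), and only then specializes to $f=-u\cdot\nabla u$. In the Stokes setting, the right-hand sides of the equation and boundary condition are fixed data, so the difference of two solutions satisfies the \emph{homogeneous} system \eqref{EQ44}. Lemma~\ref{L03} then gives uniqueness by an elementary, linear, mode-by-mode energy estimate: $\omega_{3,\xi}$ vanishes via the Dirichlet heat problem; $\xi\cdot\omega_{\tau,\xi}$ vanishes via a homogeneous Neumann condition obtained by cancellation; and with this information, $\omega_{\tau,\xi}$ satisfies the Robin condition $(\partial_z+|\xi|)\omega_{\tau,\xi}=0$ whose boundary contribution $-|\xi||\omega_{\tau,\xi}(0)|^2$ is dissipative. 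No nonlinear boundary terms arise, so there is nothing to absorb. Your energy approach, if carried out on the full nonlinear system, would require controlling the quadratic boundary forcing without smallness or analyticity assumptions, which is precisely the obstruction the paper's two-step reduction is designed to avoid.

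In short: the key idea you identified (uniqueness for the vorticity formulation) is the right one, but it must be carried out for the linear Stokes system rather than the nonlinear Navier--Stokes system, and your direct route to $\div\omega=0$ rests on a boundary identity that the reformulated conditions do not encode.
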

\colb

Note that the $H^{2}(\mathbb H)$ initial velocity does
not require any compatibility condition
other than $\div u_0=0$ and
$\gamma(u_0\cdot n)=0$ (see~\cite{T}).
This guarantees that $u\in L^{2}([0,T],H^{3}(\HH))$, which allows us to
define the trace of~$\partial_{z}\omega$.
Note that the $H^{2}$ regularity of the vorticity is sufficient to define the trace of
$\gamma(\partial_{z} \omega_{\tau,\xi})$
for~$t>0$.

We prove Theorem~\ref{main:th:2} by considering, more generally,
the Stokes problem
\begin{subequations}
	\label{sto:vel}
\begin{align} 
	&\partial_t u - \nu\Delta u + \nabla p = f \label{EQ01a}\\
	& \div u = 0 \label{EQ02a}\\
	&u=0 \onon{\partial \mathbb{R}_{+}}\\
	&u|_{t=0} = u_0 \label{EQ03a}
\end{align}
\end{subequations}
and its vorticity formulation
\begin{subequations}
	\label{sto:vor}
\begin{align}
	&\omega_t  -\nu\Delta\omega = \curl f \label{EQ225}\\
	&\nu(\partial_{z}+\Lambda_{\tau})\omega_\tau +\nu \nabla_\tau\Lambda_{\tau}^{-1} \nabla_\tau\cdot \omega_\tau
	= K_{\tau} (\curl f)
	\onon{\partial \mathbb{R}_{+}}
	,
	\label{EQ226}
	\\&
	\omega_3=0  \onon{\partial \mathbb{R}_{+}}.
	\label{EQ229}
	\\&
	\omega|_{t=0} = \curl u_0 
	\label{EQ227}
\end{align}
\end{subequations}
where
\begin{equation}
	K_{\tau}(g)
	= 
	\partial_{z}(-\Delta_D)^{-1}g_{\tau}
	+ \nabla_\tau(-\Delta_N)^{-1}
	g_3
	,
	\llabel{EQ228}
\end{equation}
assuming a mild decay of $u_0$, $f$, $u$, and $\omega$ in~$z$.
Taking the Fourier transform of~\eqref{sto:vor} with respect to the tangential variables gives
  \begin{subequations}
  \label{EQ40}
  \begin{align}
    &
    \partial_{t}\omega_\xi - \nu\Delta_\xi\omega_\xi = (\curl f)_\xi \label{EQ41}
    \\&
    -\nu(\partial_{3}+|\xi|)\omega_{\tau,\xi} + \nu \xi|\xi|^{-1}\xi\cdot \omega_{\tau, \xi}= (K_{\tau} (\curl f))_{\xi}
    \onon{\partial \mathbb{R}_+}
    \label{EQ42}
    \\&
    \omega_{3,\xi} = 0
    \onon{\partial \mathbb{R}_+}
    ,
    \label{EQ43}
  \end{align}
  \end{subequations}
for which we may derive uniqueness of solutions, where
  \begin{equation}
   \Delta_\xi= -|\xi|^2+\partial_{z}^2   
   .
   \label{EQ239}
  \end{equation}
In the most of the paper, we consider $\xi\in \mathbb{Z}^{2}$ fixed.
For the Stokes problem, we assume that
$(\curl f)_\xi$ and $(K_{\tau} (\curl f))_{\xi}$ are sufficiently smooth with a sufficient decay at
infinity. For instance, we can take
$(\curl f)_\xi \in L^{2}([0,T],L^2(\mathbb{R}_{+}))$
and
$\gamma(K_{\tau} (\curl f)_{\xi}|)$
continuous in time.

\cole
\begin{lemma}
\label{L03}
There exists a unique solution in $L^\infty([0, T], H^{1}(\mathbb{R}_{+}))\cap L^{2}([0,T],H^{2}(\mathbb{R}_{+}))$ to the Stokes problem~\eqref{EQ40} with
the initial vorticity in $H^{1}(\mathbb{R}_{+})$.
\end{lemma}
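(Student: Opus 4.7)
The plan is to exploit the fact that, for fixed $\xi\in\mathbb{Z}^2$, the system \eqref{EQ40} decouples into three scalar parabolic problems on the half-line $\mathbb{R}_+$. Equation \eqref{EQ41} is componentwise and the different components couple only through the tangential boundary condition \eqref{EQ42}. Setting $\hat\xi=\xi/|\xi|$ and writing $\omega_{\tau,\xi}=\alpha\hat\xi+\beta\hat\xi^{\perp}$, a short computation from \eqref{EQ42} shows that the boundary operator annihilates the $\hat\xi$-component, yielding the Neumann-type condition $-\nu\partial_3\alpha=\hat\xi\cdot(K_\tau(\curl f))_{\xi}$ at $z=0$, while on the $\hat\xi^{\perp}$-component it reduces to the Robin-type condition $-\nu(\partial_3+|\xi|)\beta=\hat\xi^{\perp}\cdot(K_\tau(\curl f))_{\xi}$ at $z=0$. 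Together with the Dirichlet condition \eqref{EQ43} for $\omega_{3,\xi}$, one is left with three uncoupled scalar heat equations, each with a local constant-coefficient boundary condition.

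For existence, each of these scalar problems falls within classical $L^2$-theory on the half-line. I would run a Galerkin scheme in the eigenbasis of $-\partial_3^2+|\xi|^2$ subject to the appropriate boundary condition (Dirichlet for $\omega_{3,\xi}$, Neumann for $\alpha$, Robin for $\beta$), or equivalently invoke the analytic semigroup generated by the associated self-adjoint realization. Testing the approximate equation with the approximate solution produces an $L^\infty_t L^2\cap L^2_t H^1$ bound, and testing additionally against $-\Delta_\xi$ applied to the solution upgrades this to the required $L^\infty_t H^1\cap L^2_t H^2$ bound, using the $H^1$ regularity of the initial data, the $L^2_tL^2_z$ regularity of $(\curl f)_\xi$, and the time-continuity of the boundary trace of $K_\tau(\curl f)_\xi$. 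Weak compactness then yields a solution in the stated class.

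For uniqueness, the difference $\omega_\xi$ of two solutions solves the homogeneous version of \eqref{EQ40}. Testing \eqref{EQ41} against $\bar\omega_\xi$, integrating over $\mathbb{R}_+$, and taking real parts gives
\begin{equation*}
\tfrac12\partial_t\|\omega_\xi\|_{L^2}^2+\nu\|\partial_3\omega_\xi\|_{L^2}^2+\nu|\xi|^2\|\omega_\xi\|_{L^2}^2+\nu\,\re\bigl[\partial_3\omega_\xi(0)\cdot\bar\omega_\xi(0)\bigr]=0.
\end{equation*}
The vertical contribution vanishes by \eqref{EQ43}, and the tangential one, read off from the homogeneous form of \eqref{EQ42}, equals $-\nu|\xi||\omega_{\tau,\xi}(0)|^2+\nu|\xi|^{-1}|\xi\cdot\omega_{\tau,\xi}(0)|^2$, which is nonpositive by Cauchy--Schwarz. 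Bounding the positive residue on the right-hand side by $\nu|\xi||\omega_{\tau,\xi}(0)|^2$ and absorbing it into the interior dissipation via the one-dimensional trace $|\omega_{\tau,\xi}(0)|^2\leq 2\|\omega_{\tau,\xi}\|_{L^2}\|\partial_3\omega_{\tau,\xi}\|_{L^2}$ and Young's inequality yields $\partial_t\|\omega_\xi\|_{L^2}^2\leq C(\nu,|\xi|)\|\omega_\xi\|_{L^2}^2$, so Gronwall produces $\omega_\xi\equiv 0$.

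The main obstacle I expect is the $H^2$ a priori estimate: testing the equation against $-\Delta_\xi\omega_\xi$ generates a boundary term of the form $\nu\,\re[\partial_3\omega_\xi(0)\cdot\partial_3\bar\omega_\xi(0)]$ together with contributions involving $\gamma(K_\tau(\curl f)_\xi)$. These must be controlled by combining the explicit boundary forms for $\alpha$ and $\beta$ with the half-line trace inequality and standard interpolation, so that the boundary terms carry the correct sign to be absorbed into the interior dissipation. Once this is arranged the scheme closes and the limit lies in the claimed regularity class.
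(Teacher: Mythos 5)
Your proposal is correct and rests on the same underlying structural observation as the paper's proof: for fixed $\xi$, the system decouples, with the vertical component $\omega_{3,\xi}$ solving a Dirichlet heat problem, the component of $\omega_{\tau,\xi}$ along $\hat\xi=\xi/|\xi|$ solving a Neumann heat problem (the boundary operator cancels on this direction, exactly as you compute), and the component along $\hat\xi^{\perp}$ solving a Robin heat problem. Where the two treatments diverge is in the organization of the uniqueness argument. The paper eliminates sequentially: first $\omega_{3,\xi}=0$, then $\xi\cdot\omega_{\tau,\xi}=0$ from the Neumann sub-problem, and only then runs an $L^2$ energy estimate on the remaining Robin problem. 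You skip the intermediate elimination and run a single energy estimate on the full tangential block, controlling the boundary term by the one-dimensional trace inequality. Both close. One caution on your phrasing: the remark that the boundary contribution $-\nu|\xi||\omega_{\tau,\xi}(0)|^2+\nu|\xi|^{-1}|\xi\cdot\omega_{\tau,\xi}(0)|^2$ is nonpositive by Cauchy--Schwarz is not, by itself, a favorable sign — since this quantity appears added to the dissipation on the left of the identity, nonpositivity means it acts as a source after moving to the right — so the step that actually carries the argument is the trace absorption you perform next, which with the sharp Young inequality $2|\xi|ab\le|\xi|^2a^2+b^2$ in fact yields $\partial_t\|\omega_\xi\|_{L^2}^2\le 0$ directly, making Gronwall unnecessary. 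On the existence side your proposal is more detailed than the paper, which simply cites classical theory; your reduction to three uncoupled scalar half-line problems with Dirichlet/Neumann/Robin conditions is a clean way to make that citation concrete.
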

\colb

\begin{proof}[Proof of Lemma~\ref{L03}]
The existence is due to the classical existence result of Stokes system and we then turn to the uniqueness part. 
It suffices to prove that $0$ is the only solution
of the equation
\begin{subequations}\label{EQ44}
  \begin{align}
    \partial_{t}\omega_\xi - \nu\Delta_\xi\omega_\xi &= 0\label{EQ45}\\
    -\nu(\partial_{z}+|\xi|)\omega_{\tau,\xi}|_{z=0} + \nu \xi|\xi|^{-1}\xi\cdot \omega_{\tau, \xi}|_{z=0}&=0
    \label{EQ46}
     \\
    \omega_{3,\xi}|_{z=0} & = 0,\label{EQ47}
  \end{align}
\end{subequations}
with the initial condition $0$ for every $\xi\in\mathbb{Z}^2$
and with the velocity in $L^{\infty}([0,T],H^{2}(\mathbb{R}_{+}))\cap L^{2}([0,T],H^{3}(\mathbb{R}_{+}))$.
From 
  \begin{align}
  \begin{split}
   &
   \partial_{t}\omega_{3,\xi} - \nu\Delta_\xi\omega_{3,\xi} =0
   \\&
   \omega_{3,\xi}|_{z=0} = 0
   ,
  \end{split}
   \llabel{EQ48}
  \end{align}
we first obtain that $\omega_{3,\xi}=0$.

Taking a dot product with $\xi$ with the equations for $\omega_{1,\xi}$
and $\omega_{2,\xi}$ in \eqref{EQ45}, we obtain
  \begin{align}
   \partial_{t} (\xi \cdot \omega_{\tau,\xi}) - \nu\Delta_\xi (\xi \cdot\omega_{\tau,\xi}) = 0
    .
   \label{EQ49}
  \end{align}
Then, multiplying both sides of \eqref{EQ46} with $\xi$
leads to
  \begin{align}
   -\nu(\partial_{z}+|\xi|)\xi \cdot \omega_{\tau,\xi}
   + \nu|\xi|\xi\cdot \omega_{\tau, \xi}
   =0
   \onon{\partial \mathbb{R}_{+}}
   ,
  \llabel{EQ50}
  \end{align}
which gives, after a cancellation,
  \begin{align}
   \partial_{z} (\xi \cdot \omega_{\tau,\xi}) =0
   \onon{\partial \mathbb{R}_{+}}
   .
   \label{EQ51}
  \end{align}
Combining
\eqref{EQ49}, \eqref{EQ51}, and
$\xi \cdot \omega_{\tau,\xi}|_{t=0}=0$
gives
  \begin{equation}
   \xi \cdot \omega_{\tau,\xi}=0
   \label{EQ52}
  \end{equation}
for $t\geq0$.
Using \eqref{EQ52} in \eqref{EQ46}, we obtain
  \begin{align}
   \begin{split}
   &\partial_{t} \omega_{\tau,\xi} - \nu \Delta_{\xi} \omega_{\tau,\xi}=0
   \\&
   (\partial_z + |\xi|)\omega_{\tau,\xi}=0
   \onon{\partial \mathbb{R}_{+}}
   .
  \end{split}
   \llabel{EQ53}
  \end{align}
Taking an inner product
of \eqref{EQ49} with $\omega_{\tau,\xi}$, we get
  \begin{align}
    \begin{split}
   &
   \frac{1}{2\nu}\frac{d}{dt}\left\|\omega_{\tau,\xi}\right\|^2_{L^2(\mathbb{R}_+)}
    =
    \int_{\mathbb{R}_+} (-|\xi|^2 + \partial_z^2) \omega_{\tau,\xi}(z) \cdot \omega_{\tau,\xi}(z) dz
   \\&\indeq    
    =
  -|\xi|^2\cdot \left\|\omega_{\tau,\xi}\right\|_{L^2(\mathbb{R}_+)}^2
  - |\xi|\left| \omega_{\tau,\xi}(0) \right|^2
  - \int_{\mathbb{R}^+} \left| \partial_3 \omega_{\tau,\xi}(z) \right|^2 dz \leq 0
  ,
  \end{split}
   \llabel{EQ54}
  \end{align}
where we integrated by parts in the second step.
This implies that $\omega_{\tau,\xi}=0$, concluding the proof.
\end{proof}

\cole
\begin{theorem}
\label{T01}
Consider the Stokes problem \eqref{sto:vel}
with the initial data $u_0\in H^{2}(\mathbb{H})$
and the force $f\in L^{2}([0,T],H^{1}(\HH))$,
with the classical compatibility conditions.
Then the system is equivalent to~\eqref{sto:vor}.
\end{theorem}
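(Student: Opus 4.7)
The plan is to establish equivalence by two matching implications. For the forward direction --- if $(u,p)$ solves \eqref{sto:vel}, then $\omega=\curl u$ solves \eqref{sto:vor} --- I would essentially recycle the computation already performed in the derivation of the vorticity formulation in Section~\ref{sec02}. Applying $\curl$ to \eqref{EQ01a} yields \eqref{EQ225}, and the condition $\omega_{3}=0$ on $\partial\HH$ is immediate from $u_{\tau}|_{\partial\HH}=0$. For the tangential boundary condition \eqref{EQ226}, I would differentiate $u_{\tau}|_{\partial\HH}=0$ in time, replace $\partial_{t}u$ using \eqref{EQ01a}, write $u=\curl\Phi(\omega)$ via Lemma~\ref{L01}, and invoke the trace identities \eqref{EQ24}--\eqref{EQ23} from Lemma~\ref{L02} to handle the Laplacian terms; this is the Stokes analog of the calculation \eqref{EQ33}--\eqref{EQ34}, with $\curl f$ replacing the Navier--Stokes nonlinearity, and it produces exactly \eqref{EQ226}. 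The hypothesis $u_{0}\in H^{2}(\HH)$ together with $f\in L^{2}([0,T],H^{1}(\HH))$ places $u$ in $L^{\infty}([0,T],H^{2}(\HH))\cap L^{2}([0,T],H^{3}(\HH))$ by classical Stokes theory, which is enough to make the relevant boundary traces well-defined.

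For the backward direction --- if $\omega$ solves \eqref{sto:vor}, then $\omega=\curl u$ for the Stokes solution $u$ of \eqref{sto:vel} --- I would proceed by uniqueness. Solve \eqref{sto:vel} classically to obtain the unique $(u,p)$, and set $\omega':=\curl u$. By the forward implication already established, $\omega'$ also solves \eqref{sto:vor} with the same $f$ and $u_{0}$. Then $\tilde\omega:=\omega-\omega'$ satisfies the fully homogeneous vorticity system with zero initial data. Taking the Fourier transform in the tangential variables reduces $\tilde\omega$, frequency by frequency, to precisely the system \eqref{EQ44} treated in Lemma~\ref{L03}. Applying that lemma at each $\xi\in\mathbb{Z}^{2}$ gives $\tilde\omega_{\xi}\equiv 0$, hence $\omega=\curl u$, closing the equivalence.

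The essential obstacle is the backward direction: the vorticity system \eqref{sto:vor} has its own existence theory, and a priori a solution need not be the curl of any velocity field. The argument bypasses this by appealing to classical existence for \eqref{sto:vel} and then to the frequency-wise uniqueness of Lemma~\ref{L03}, rather than attempting to invert $\curl$ directly via the Biot--Savart identity \eqref{EQ19}. A secondary but necessary bookkeeping step is to verify that $\tilde\omega$ belongs, in each frequency, to the class $L^{\infty}([0,T],H^{1}(\mathbb{R}_{+}))\cap L^{2}([0,T],H^{2}(\mathbb{R}_{+}))$ required by Lemma~\ref{L03}; this follows from the regularity of $u$ furnished by the Stokes theory together with the assumed regularity of $\omega$.
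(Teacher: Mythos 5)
Your proposal matches the paper's proof in both directions: the forward implication recycles the derivation from Section~\ref{sec02} (as the paper explicitly notes), and the backward implication is established by comparing $\omega$ against $\curl\tilde u$ where $\tilde u$ is the classical Stokes solution, then invoking the frequency-wise uniqueness of Lemma~\ref{L03}. The only cosmetic difference is that you phrase the uniqueness step via the difference $\tilde\omega=\omega-\curl\tilde u$, whereas the paper applies Lemma~\ref{L03} directly as a uniqueness statement; these are the same argument.
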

\colb

\begin{proof}[Proof of Theorem~\ref{T01}]
The argument above, starting with \eqref{EQ19}, showing if $u$ solves \eqref{EQ01}--\eqref{EQ05},
then $\omega=\curl u$ solves \eqref{EQ35}--\eqref{EQ37},
carries over to the Stokes case. Namely, the argument shows that if $u$ is a solution of \eqref{EQ01a}--\eqref{EQ03a},
then $\omega=\curl u$ solves \eqref{EQ225}--\eqref{EQ227}.
Conversely, assuming that $\omega$ solves the equations \eqref{EQ225}--\eqref{EQ227},
and letting $u=\Phi(\omega)$,
we claim that $u$ solves \eqref{EQ01a}--\eqref{EQ03a}.
Consider the solution $\tilde u$ of the problem \eqref{EQ01a}--\eqref{EQ03a}; we claim that
$u=\tilde u$. The vorticity $\tilde \omega$ corresponding to
$\tilde u$, i.e.,
$\tilde \omega=\curl \tilde u$, satisfies
\eqref{EQ225}--\eqref{EQ226}. By Lemma~\ref{L03}, we obtain
$\omega=\tilde \omega$, from there
$u=\Phi(\curl \omega)=\Phi(\curl \tilde \omega)=\tilde u$,
which gives $u=\tilde u$, as claimed.
\end{proof}
\begin{proof}[Proof of Theorem~\ref{main:th:2}]
The proof follows directly from Theorem~\ref{T01} by setting $f= -u\cdot\nabla u$ in the equations~\eqref{sto:vel}.
\end{proof}

\startnewsection{Analytic semigroup}{sec03}
In this section, we study the horizontal components of the system~\eqref{sto:vor}, which leads to the resolvent equations
  \begin{align}
  \begin{split}
   &
  \lambda \omega_\xi  -\nu \Delta_{\xi} \omega_\xi = f
  \\&
    -(\partial_{3}+|\xi|)\omega_{\xi}|_{z=0} +  \xi|\xi|^{-1}\xi\cdot \omega_{\xi}|_{z=0}= 0
   ,
  \end{split}
   \llabel{EQ55}
  \end{align}
for fixed $\xi$. And the vertical component satisfies the heat equation with the Dirichlet boundary condition, for which the analysis is standard.

\begin{definition}
Let $A_{\xi}$ be the realization of the Laplace operator $\nu \Delta_{\xi}$ with the boundary condition
  \begin{equation}
   -\gamma(\partial_{3}+|\xi|)u +  \gamma\xi|\xi|^{-1}\xi\cdot u= 0,
  \llabel{EQ56}
  \end{equation}
in $L^2(\mathbb{R}_+)$, where $\gamma$ denotes the trace operator at $\{z=0\}$. We define the domain of $A_{\xi}$ by
  \begin{equation}
   D(A_{\xi}):= \left\{u \in H^2(\mathbb{R}_+): -\gamma(\partial_{3}+|\xi|)u +  \gamma\xi|\xi|^{-1}\xi\cdot u= 0\right\}.
   \llabel{EQ57}
     \end{equation}
\end{definition}
Above and in the sequel, we do not distinguish in notation between vector and scalar valued function spaces; for example, we write
$L^{2}(\mathbb{R}_+)$ for
$(L^{2}(\mathbb{R}_+))^2$.

\subsection{Resolvent equation in $L^2$}\label{S3.1}
We first solve the resolvent equation 
  \begin{align}\label{EQ59}
   (\lambda - \nu \Delta_{\xi})u &=f, \\
   -\gamma(\partial_{3}+|\xi|)u +  \gamma\xi|\xi|^{-1}\xi\cdot u&= 0. \label{EQ60}
  \end{align}
for $f\in L^2{(\mathbb{R}_+)}$, where $\xi\in\mathbb{Z}$ is fixed and
$\lambda \in (-|\xi|^2 \nu, +\infty)\setminus \{0\}$.
It is straight-forward to check that the results obtained below extend
to~$\lambda \in \mathbb{C} \setminus ((-\infty,-|\xi|^2\nu]\cup \{0\})$.
To start, we extend $f$ evenly to $\bar{f}$ on $\mathbb{R}$ by
  \begin{align}
  \bar{f}(z)= \begin{cases}
    f(z), & z\in [0,+\infty)\\
    f(-z), &z\in (-\infty, 0]
  \end{cases}
   \llabel{EQ61}
  \end{align}
and solve the non-homogeneous equation 
  \begin{align}
   (\lambda - \nu \Delta_{\xi}) v =\bar{f}
   \label{EQ62}
  \end{align}
in~$L^2(\mathbb{R})$.
Since $f\in L^2(\mathbb{R}_+)$ and thus
$\bar f\in L^{2}(\mathbb{R})$, we may take the Fourier transform of \eqref{EQ62} in $z$ and obtain 
  \begin{align}
   (\lambda+\nu( |\xi|^2+ |\zeta|^2))\mathcal{F}(v)=\mathcal{F}(\bar{f}),
   \label{EQ63}
  \end{align}
with $\zeta$ denoting the corresponding Fourier variable. Hence, we get
  \begin{equation}
   {v}= \mathcal{F}^{-1}\left((\lambda+ \nu(|\xi|^2 + |\zeta|^2))^{-1} \mathcal{F}(\bar{f}) \right)= \mathcal{F}^{-1}\left((\lambda+ \nu( |\xi|^2+ |\zeta|^2))^{-1}\right) \star \bar{f}
   ,
   \llabel{EQ64}
   \end{equation}
where $\star$ denotes the convolution in $z$ variable.
Letting
  \begin{equation}
   \mu=\nu^{-1/2} \sqrt{\lambda + \nu|\xi|^2 }
   \label{EQ80}
  \end{equation}
results in
  \begin{equation}
  \begin{aligned}
   \mathcal{F}^{-1}\left((\lambda+ \nu(|\xi|^2+ |\zeta|^2))^{-1}\right)(z)
    &= \frac{1}{\nu } \int_{\mathbb{R}} \frac{1}{\mu^2+ |\zeta|^2}e^{ i z \cdot \zeta} \,d\zeta
    = \frac{1}{\nu \mu } \int_{-\infty}^{\infty} \frac{1}{1+ \zeta^2} e^{ i z \mu\cdot \zeta} \,d\zeta
.
   \end{aligned}
   \llabel{EQ65}
  \end{equation}
Recalling the Fourier transform of the Poisson kernel, we have
  \begin{equation}
  \mathcal{F}^{-1}\left((\lambda+ \nu(|\xi|^2+ |\zeta|^2))^{-1}\right)(z) =\frac{1}{2\nu \mu}  e^{- \mu |z|}   .
   \llabel{EQ66}
   \end{equation}
Therefore, we obtain 
  \begin{align}
  \begin{split}
   v(z)
   &= \frac{1}{2\nu \mu}\int_{-\infty}^{\infty}   e^{- \mu |z'-z|} \bar{f}(z') dz'
   =  \frac{1}{2\nu \mu}\int_{0}^{\infty}   \left(e^{- \mu |z'-z|} + e^{- \mu |z+z'|}\right) {f}(z') dz'
  .
  \end{split}
  \llabel{EQ67}
  \end{align}
The difference $w=u-v$ satisfies
  \begin{align}
    \begin{split}
   &
   (\lambda - \nu \Delta_{\xi})w =0
   \\&
   -\gamma(\partial_{z}+|\xi|)w+  \gamma\xi|\xi|^{-1}\xi\cdot w 
   = \gamma(\partial_{z}+|\xi|)v -  \gamma(\xi|\xi|^{-1}\xi\cdot v)
  \onon{\partial \mathbb{R}_{+}}
   .
  \end{split}
  \label{EQ69}
  \end{align}
In order to obtain a solution in $L^2(\mathbb{R}_{+})$,
we note that, based on the first equation in \eqref{EQ69}, $w$ has the form
  \begin{align}
   w(z)=  c_0 e^{-\mu z}
   ,
   \label{EQ70}
  \end{align}
where $c_0 \in \mathbb{C}^2$.
To find $c_0$, we first compute the value of $(\partial_{z}+|\xi|) w +  \xi|\xi|^{-1}\xi\cdot w$ at the boundary,
  \begin{align}
    \begin{split}
   \gamma(
   -(\partial_{z}+|\xi|) w +  \xi|\xi|^{-1}\xi\cdot w)
   &=
   \gamma(\mu c_0 e^{-\mu z} - |\xi| c_0 e^{\mu z} + \xi|\xi|^{-1} \xi \cdot c_0 e^{\mu z})\\
   &= \mu c_0 -|\xi| c_0 +  \xi|\xi|^{-1} \xi \cdot c_0 .
  \end{split}
   \label{EQ71}
  \end{align}
From \eqref{EQ71}, we thus obtain
  \begin{align}\label{EQ72}
   Bc_0 =
   \gamma(   (\partial_{z}+|\xi|)v -  \xi|\xi|^{-1}\xi\cdot v)
   ,
  \end{align}
where the matrix $B$ is given by
  \begin{align}B=
  \begin{pmatrix}
  \mu-|\xi| + \xi_1^2 |\xi|^{-1} & \xi_1\xi_2 |\xi|^{-1}\\
  \xi_1\xi_2 |\xi|^{-1} & \mu -|\xi| + \xi_2^2 |\xi|^{-1}\\
  \end{pmatrix}
  .
  \label{EQ73}
  \end{align}
It is easy to check 
\begin{align*}
 \det B = \mu(\mu-|\xi|),
\end{align*}
and hence $B$ is invertible when
$\mu\neq 0$ and $\mu\neq |\xi|$, which holds
if $\lambda\neq -\nu\xi^2$ and $\lambda\neq 0$. 
We then obtain
$c_0$ from \eqref{EQ72}, which then gives $w$ by~\eqref{EQ70}.
Then 
$u=v+w$ satisfies the system~\eqref{EQ59} with the boundary condition~\eqref{EQ60}.
For a solution $u\in L^2(\mathbb{R}_+)$, denoting
\begin{align*}
	u=R(\lambda;A_{\xi})f:=(\lambda-A_{\xi})^{-1}f,
\end{align*}
then we have
  \begin{align}
  \begin{split}
   &
   R(\lambda;A_{\xi})f
   =
   v
      +
   w
  \end{split}
   \llabel{EQ234}
  \end{align}
where
  \begin{equation}
   v(y)
   =
   \frac{1}{2\nu \mu}\int_{0}^{\infty}   \left(e^{- \mu |z-y|} + e^{- \mu |z+y|}\right) {f}(z) dz
   \label{EQ58}
  \end{equation}
and
  \begin{equation}
   w(y)=B^{-1}  \gamma((\partial_{3}+|\xi|)v -  \xi|\xi|^{-1}\xi\cdot v)  e^{-\mu y}
   ,
   \label{EQ68}
   \end{equation}
with $\mu$ as in~\eqref{EQ80}.

Next, we show that the operator $A_\xi$ is sectorial and thus generates an analytic semigroup
  \begin{align}\label{EQ74}
    S_{\xi}(t) := \int_{\Gamma} e^{\lambda t} R(\lambda; A_{\xi}) d \lambda,
  \end{align}
where $\Gamma \subseteq \rho(A_{\xi})$ is a path which starts
as $\re \lambda \to -\infty$ as $\im \lambda \to -\infty$,
encircles $0$ on the right side and ends
as $\re \lambda \to -\infty$ and $\im \lambda \to \infty$.

\cole
\begin{theorem}\label{T1}
The operator $A_{\xi}$ is sectorial and generates an analytic semigroup $S_{\xi}$ on~$L^2$. The resolvent set $\rho(A_{\xi})$ of $A_{\xi}$ contains $\mathbb{C} \setminus \left((-\infty , -\nu |\xi|^2] \cup \{0\}\right)$. Moreover, we have
  \begin{align}\label{EQ75}
  \|R(\lambda; A_{\xi})\|_{L^2\to L^2}  \leq C_{\theta}{\left|\lambda + \nu |\xi|^2\right|^{-1}} 
  \end{align}
  and
  \begin{align}
    \|R(\lambda; A_{\xi})\|_{L^2\to H^1}  \leq C_{\theta}\nu^{-1/2}|\lambda + \nu|\xi|^2 |^{-1/2} \label{EQ76}
  \end{align}
for
  \begin{equation}
   \lambda \in S_{\theta, \xi}= \{\lambda' \in \mathbb{C}: |\lambda'| \ge c_0|\xi|^2,\ |\arg\lambda'|<\theta\}
   ,
   \llabel{EQ232}
  \end{equation}
where $\theta\in(0,\pi)$ is a constant and $c_0>0$ is arbitrarily small.
\end{theorem}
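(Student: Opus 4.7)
The plan is to exploit the explicit resolvent formula $R(\lambda; A_\xi)f = v + w$ constructed in Section~\ref{S3.1}, where $v$ is given by \eqref{EQ58} and $w$ by \eqref{EQ68}, and to estimate each piece separately on the sector $S_{\theta,\xi}$. Once the resolvent bounds \eqref{EQ75}--\eqref{EQ76} are established, sectoriality of $A_\xi$ follows from the definition, and the analytic semigroup $S_\xi(t)$ is produced via the Dunford-Taylor integral \eqref{EQ74} in the standard way.

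For the interior part $v$, I would work on the Fourier side using $\widehat{v}(\zeta)=\widehat{\bar f}(\zeta)/(\lambda+\nu(|\xi|^2+|\zeta|^2))$. The key observation is that for $\lambda \in S_{\theta,\xi}$, the shifted point $\lambda+\nu|\xi|^2$ stays in a sector uniformly away from $(-\infty,0]$, so that for every $\zeta \in \mathbb{R}$,
\begin{equation*}
\bigl|\lambda + \nu|\xi|^2 + \nu|\zeta|^2\bigr| \;\ge\; c_\theta \bigl(\, |\lambda + \nu|\xi|^2| + \nu|\zeta|^2\,\bigr).
\end{equation*}
Plancherel then yields $\|v\|_{L^2} \le C_\theta |\lambda+\nu|\xi|^2|^{-1}\|f\|_{L^2}$, and the elementary inequality $|\zeta|^2/(A+\nu|\zeta|^2)^2 \le 1/(4\nu A)$ (with $A=|\lambda+\nu|\xi|^2|$) produces the $\nu^{-1/2}|\lambda+\nu|\xi|^2|^{-1/2}$ factor needed for $\|\partial_z v\|_{L^2}$; analogously one extracts $\|\partial_z^2 v\|_{L^2}\le C\nu^{-1}\|f\|_{L^2}$ for use in the trace estimates below.

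For the boundary corrector $w(y)=c_0 e^{-\mu y}$ with $c_0 = B^{-1} \gamma\bigl((\partial_z+|\xi|)v - \xi|\xi|^{-1}\xi\cdot v\bigr)$, I would first analyze $B$. Using $\det B = \mu(\mu-|\xi|)$ together with the identity $\mu-|\xi|=\lambda/(\nu(\mu+|\xi|))$ and the sector lower bound $\re\mu \gtrsim |\mu| \gtrsim |\xi| + \nu^{-1/2}|\lambda+\nu|\xi|^2|^{1/2}$, I would show that $\|B^{-1}\|$ is controlled by a constant times $\nu^{1/2}|\lambda+\nu|\xi|^2|^{-1/2}$, uniformly on $S_{\theta,\xi}$. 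The trace inequality $|g(0)|^2 \le 2\|g\|_{L^2}\|g'\|_{L^2}$ applied to the components of $v$ and $\partial_z v$, combined with the $v$-bounds above and the inequality $|\xi| \le C_\theta \nu^{-1/2}|\lambda+\nu|\xi|^2|^{1/2}$, yields $|c_0| \le C_\theta \nu^{-1/4}|\lambda+\nu|\xi|^2|^{-3/4}\|f\|_{L^2}$. Together with $\|e^{-\mu y}\|_{L^2(\mathbb{R}_+)}=(2\re\mu)^{-1/2}$ and $\|\partial_z e^{-\mu y}\|_{L^2(\mathbb{R}_+)}=|\mu|(2\re\mu)^{-1/2}$, this gives the required estimates on $w$.

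The main obstacle is the boundary contribution $w$: the delicate part is tracking how $|\xi|$ and $|\lambda|$ trade off in the sector so that $\|B^{-1}\|$, the trace estimates, and the exponential decay of $e^{-\mu y}$ combine to give a uniform bound. The threshold $|\lambda|\ge c_0|\xi|^2$ in the definition of $S_{\theta,\xi}$ is precisely what keeps $\mu-|\xi|$ away from zero and $\det B$ nondegenerate, while $|\arg\lambda|<\theta$ ensures that $\lambda+\nu(|\xi|^2+|\zeta|^2)$ stays off the negative real axis with a quantitative gap. Once both bounds \eqref{EQ75} and \eqref{EQ76} are in hand, $A_\xi$ is sectorial, the resolvent set contains the stated region, and the analytic semigroup $S_\xi$ is defined by \eqref{EQ74} via classical semigroup theory.
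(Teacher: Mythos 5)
Your proposal is correct and follows essentially the same path as the paper's proof: split the resolvent into the interior piece $v$ (handled by Plancherel on the Fourier side) and the boundary corrector $w = c_0 e^{-\mu y}$, then estimate $c_0$ via the invertibility of $B$ and the boundary trace of $v$, finishing with $\|e^{-\mu y}\|_{L^2}=(2\re\mu)^{-1/2}$. The paper streamlines the $w$-estimate slightly --- since $v$ comes from an even extension it satisfies a homogeneous Neumann condition, so $\partial_z v(0)=0$ and only $|v(0)|$ is needed (making the $\|\partial_z^2 v\|_{L^2}$ bound and the trace of $\partial_z v$ unnecessary), and it computes $B^{-1}$ applied to the resulting boundary vector explicitly (that vector is the eigenvector of $B$ for the eigenvalue $\mu-|\xi|$) rather than bounding $\|B^{-1}\|$ as an operator norm --- but these are only cosmetic differences and your trace-inequality version yields the same estimates.
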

\colb

\begin{proof}[Proof of Theorem~\ref{T1}]
With $\lambda u - A_{\xi} u=f$, we aim to bound $\|u\|_{L^2}$ by $\|f\|_{L^2}$. Recall that $u(y)=v(y) + w(y)$, where
$v$ and $w$ are given in \eqref{EQ58} and \eqref{EQ68}, respectively.
By \eqref{EQ63} and Plancherel's identity, we have
  \begin{equation}\label{EQ77}
   \|v\|_{L^2}^2 =\| \mathcal{F}(v)\|_{L^2}^2
   =c\int_0^{+\infty} \frac{1}{ |\lambda+\nu( |\xi|^2+ |\zeta|^2)|^2}|\mathcal{F}(\bar{f})(\zeta)|^2 d\zeta
   \lec \frac{1}{|\lambda+\nu |\xi|^2|^2} \|f\|_{L^2}^2
   ,
  \end{equation}
where $c$ is a constant.
For $w$, note that
  \begin{equation}
   B^{-1}=\frac{1}{|\xi| \mu (\mu-|\xi|)}  \begin{pmatrix}
   \mu|\xi|- \xi_1^2 & -\xi_1\xi_2 \\
   -\xi_1\xi_2  & \mu |\xi|- \xi_2^2 \\
   \end{pmatrix}
   \llabel{EQ81}
  \end{equation}
and
  \begin{equation}
   (\partial_{3}+|\xi|)v -  \xi|\xi|^{-1}\xi\cdot v
     = |\xi|v -  \xi|\xi|^{-1}\xi\cdot v
     \onon{\partial \mathbb{R}_{+}}
    .
   \llabel{EQ82}
  \end{equation}
Hence, we have
  \begin{equation}
   w(z)
   = \frac{1}{|\xi|(\mu-|\xi|)} \begin{pmatrix} \xi_2\\-\xi_1 \end{pmatrix} \gamma(\xi_2 v_1 -\xi_1 v_2)e^{-\mu z}
   .
   \llabel{EQ83}
  \end{equation}
Since also
  \begin{equation}
   |v(0)| \lec \frac{1}{\nu \mu\sqrt{\mu}} \|f\|_{L^2},
   \llabel{EQ84}
  \end{equation}
we get
  \begin{equation}
   \|w\|_{L^2}
   \lec
   \frac{|\xi|}{|\sqrt{\mu}(\mu-|\xi|)|}  |v(0)|
   \lec
   \frac{1}{\nu \mu^2}\frac{|\xi|}{|\mu-|\xi||}  \|f\|_{L^2}
   ,
   \llabel{EQ230}
  \end{equation}
where the norms are taken in the $z$~variable.
Recalling the definition of $\mu$ from \eqref{EQ80}, we obtain 
  \begin{equation}
   \|w\|_{L^2}
   \lec
   \frac{|\xi|}{|\lambda + \nu |\xi|^2|\,|\mu-|\xi||}
   \|f\|_{L^2}
   \leq
   \frac{1}{(\sqrt{1+ \fractext{\lambda}{\nu |\xi|^2}}-1)|\lambda + \nu |\xi|^2|}
   .
   \label{EQ231}
   \end{equation}
Since $\lambda \in S_{\theta, \xi}$, there exists a constant $C_{\theta,\omega}$ such that
  \begin{equation}
   \|w\|_{L^2}\leq \frac{C_{\theta,\omega}}{|\lambda + \nu |\xi|^2|} \|f\|_{L^2}.
  \label{EQ78}
  \end{equation}
Combining \eqref{EQ77} and \eqref{EQ78}, we obtain~\eqref{EQ75}.
Next, by Young's inequality, we have 
  \begin{align}
  \begin{split}
    \|v\|_{H^1}^2
    &=\|\zeta \mathcal{F}(v)\|_{L^2}^2=\frac{1}{\nu}\int_0^{+\infty} \frac{\nu |\zeta|^2}{ |\lambda+\nu( |\xi|^2+ |\zeta|^2)|^2}|\mathcal{F}(\bar{f})(\zeta)|^2\,d\zeta
    \\&
    \leq
    \frac{1}{\nu|\lambda+\nu |\xi|^2|} \|f\|_{L^2}^2.
  \end{split}
  \label{EQ79}
  \end{align}
Similarly, 
  \begin{align}
  \begin{split}
   \|w\|_{H^{1}}
   &
   \leq
   \frac{\sqrt{\mu}|\xi|}{|(\mu-|\xi|)|}  |v(0)|
   \lec \frac{|\xi|}{\nu\mu|\mu-|\xi||}
   \|f\|_{L^2}
   \lec
   \frac{1}{(\sqrt{1+ \fractext{\lambda}{\nu |\xi|^2}}-1)|\lambda + \nu |\xi|^2|^{1/2}\sqrt{\nu}}
   \|f\|_{L^2}.
  \end{split}
  \label{EQ79.1}
  \end{align}
Combining \eqref{EQ79} and \eqref{EQ79.1}, we then obtain~\eqref{EQ76}.
\end{proof}

\subsection{Extension of $G_{\lambda,\xi}$ and $S_{\xi}(t)$}\label{secGreen}
We now extend our definition of
$R(\lambda, A_\xi)$ and $S_{\xi}(t)$ from $L^2(\mathbb{R}_{+})$ to the space of
finite Borel measures~$\mathcal{M}=\mathcal{M}(\mathbb{R}_{+})$.  We first prove the following lemma.

\cole
\begin{lemma}
\label{L04}
The operator $R(\lambda; A_{\xi})$ is symmetric for all $\lambda \in \rho(A_{\xi})$.
\end{lemma}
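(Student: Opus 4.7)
The strategy is to exhibit directly the integral kernel of $R(\lambda;A_\xi)$ using the explicit formulas \eqref{EQ58} and \eqref{EQ68}, and to verify that the associated matrix-valued kernel $G_{\lambda,\xi}(y,z)$ satisfies $G_{\lambda,\xi}(y,z)=G_{\lambda,\xi}(z,y)^{T}$; equivalently, $\int (R(\lambda;A_\xi)f)\cdot g\,dy=\int f\cdot R(\lambda;A_\xi)g\,dy$ under the bilinear pairing, which is the sense in which symmetry can hold for all $\lambda\in\rho(A_\xi)$ (not only real ones). From \eqref{EQ58}, the ``bulk'' contribution $v$ yields the kernel $\frac{1}{2\nu\mu}(e^{-\mu|y-z|}+e^{-\mu(y+z)})I$, which is manifestly symmetric both as a function of $(y,z)$ and as a matrix.

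The main computation is for the boundary correction $w$. Since $v$ is obtained from the even extension $\bar f$ on $\mathbb{R}$, it is itself an even function of $y$; consequently, $\gamma(\partial_3 v)=0$, while $\gamma(v)=\frac{1}{\nu\mu}\int_0^\infty e^{-\mu z}f(z)\,dz$. Substituting into \eqref{EQ68} I expect
\begin{equation*}
  w(y)=\frac{B^{-1}M}{\nu\mu}\,e^{-\mu y}\int_0^\infty e^{-\mu z}f(z)\,dz,
\end{equation*}
where $M=|\xi|I-|\xi|^{-1}\xi\xi^{T}$ represents $(\partial_3+|\xi|)v-\xi|\xi|^{-1}\xi\cdot v$ at $z=0$ after using $\gamma(\partial_3 v)=0$. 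This has integral kernel $\frac{B^{-1}M}{\nu\mu}\,e^{-\mu(y+z)}$, which is clearly symmetric in $(y,z)$; hence the question reduces to whether $B^{-1}M$ is a symmetric matrix.

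The decisive algebraic observation will be the identity $B+M=\mu I$: indeed $B=(\mu-|\xi|)I+|\xi|^{-1}\xi\xi^{T}$ from \eqref{EQ73}, and $M=|\xi|I-|\xi|^{-1}\xi\xi^{T}$, so their sum is $\mu I$. Therefore $B^{-1}M=\mu B^{-1}-I$, which is symmetric because $B$ (and hence $B^{-1}$) is symmetric. The only real obstacle is checking that the boundary correction does not spoil the symmetry, and this is resolved by the identity $B+M=\mu I$. As a conceptual sanity check, I would note in passing the abstract alternative: integration by parts shows $A_\xi$ is symmetric under the bilinear pairing (the boundary terms cancel precisely because $M$ is symmetric, since the boundary condition forces $\partial_3 u(0)=-Mu(0)$), from which symmetry of $R(\lambda;A_\xi)$ on $\rho(A_\xi)$ follows by the standard manipulation $\langle(\lambda-A_\xi)u,v\rangle=\langle u,(\lambda-A_\xi)v\rangle$ applied to $u=R(\lambda;A_\xi)f$ and $v=R(\lambda;A_\xi)g$.
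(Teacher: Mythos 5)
Your argument is correct and takes a genuinely different, more explicit route than the paper. The paper proves the lemma abstractly: for $u,v\in D(A_\xi)$ it integrates by parts in $\langle(\lambda-A_\xi)u,v\rangle_{L^2}$ and uses the boundary condition to cancel the boundary terms, obtaining $\langle(\lambda-A_\xi)u,v\rangle=\langle u,(\lambda-A_\xi)v\rangle$ and hence the symmetry of the resolvent; this is precisely the ``conceptual sanity check'' you mention only in passing at the end of your proposal. Your primary argument instead exhibits the resolvent kernel and verifies $G_{\lambda,\xi}(y,z)=G_{\lambda,\xi}(z,y)^{T}$ directly. The bulk contribution $\frac{1}{2\nu\mu}\bigl(e^{-\mu|y-z|}+e^{-\mu(y+z)}\bigr)I_2$ is manifestly symmetric; for the boundary correction, the key identity $B+M=\mu I_2$, with $B=(\mu-|\xi|)I_2+|\xi|^{-1}\xi\xi^{T}$ from \eqref{EQ73} and $M=|\xi|I_2-|\xi|^{-1}\xi\xi^{T}$ coming from $\gamma(\partial_3 v)=0$, reduces $B^{-1}M$ to $\mu B^{-1}-I_2$, which is symmetric because $B$ is. This is a clean algebraic observation that the paper does not record. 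Your route has the advantage of being fully constructive (it produces the symmetric kernel, which feeds directly into the Green's function computations in Section~\ref{sec04}), and you correctly flag that the statement should be read with respect to the bilinear pairing $\int f\cdot g$ so that it makes sense for all complex $\lambda\in\rho(A_\xi)$; the paper's proof, by contrast, is shorter and does not presuppose that the resolvent equation has already been solved explicitly.
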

\colb

\begin{proof}[Proof of Lemma~\ref{L04}]
For any $u, v\in  D(A_{\xi})\subseteq C^{1}(\mathbb{R}_+)$,
we denote
$f = (\lambda-A_{\xi}) u$ and $g= (\lambda-A_{\xi})v$.
Integrating by parts gives
  \begin{equation}
  \begin{aligned}
  \langle f, R(\lambda;A_{\xi})g \rangle_{L^2}
   &=
     \langle (\lambda- A_{\xi})u , v\rangle_{L^2}
   \\&
   =\langle u , ({\lambda}+ \nu |\xi|^2) v\rangle_{L^2} - \nu \partial_z u(z) \cdot \bar{v}(z)|_{z=0} 
   \\&\indeq
   + \nu u(z) \cdot \partial_{z} \bar{v}(z)|_{z=0} 
   - \int_0^{+\infty} u(z) \cdot \nu \partial_z^2  \bar{v}(z) \,dz.
  \end{aligned}
  \llabel{EQ87}
  \end{equation}
Since $u, v \in D(A_{\xi})$, applying the boundary condition gives
  \begin{align}
    \begin{split}
   \langle (\lambda- A_{\xi})u , v\rangle_{L^2}
   &= \langle u , ({\lambda}+ |\xi|^2) v\rangle_{L^2} 
     +   u \cdot \nu (|\xi| \bar{v} + \partial_z \bar{v} - \xi |\xi|^{-1} \xi \cdot \bar{v} )|_{z=0}
     \\&\indeq
  - \int_0^{+\infty} u(z) \cdot \nu \partial_z^2 \bar{v}(z) dz,
  \end{split}
  \llabel{EQ85}
  \end{align}
from where it follows
\begin{align}
  \begin{split}
\langle f, R(\lambda;A_{\xi}) g  \rangle_{L^2}=\langle (\lambda- A_{\xi})u , v\rangle_{L^2} = \langle  u, (\lambda - A_{\xi})v   \rangle_{L^2}  = \langle R(\lambda;A_{\xi}) f ,g \rangle_{L^2}
  ,
  \end{split}
   \llabel{EQ86}
\end{align}
and the proof is complete.
\end{proof}

For $\phi \in \mathcal{M}$, we consider the linear functional $T_{\phi}\colon L^2(\mathbb{R}_+) \to \mathbb{R}$ defined by
  \begin{equation}
   T_{\phi}(f) = \langle \phi ,  R({\lambda} ; A_{\xi}) f \rangle
  \comma
  f\in L^{2}(\mathbb{R}_{+})
   \llabel{EQ88}
 \end{equation}
Since $R({\lambda} ; A_{\xi}) f \in H^1(\mathbb{R}_+) \subseteq  C_{\text{b}}(\mathbb{R}_+)$
for $f\in L^{2}(\mathbb{R}_{+})$,
we obtain by~\eqref{EQ76}, that
$T_{\phi}$ is a bounded linear functional on
$L^{2}(\mathbb{R}_{+})$.
Therefore,
for $\phi\in\mathcal{M}$,
we may define $R(\lambda; A_{\xi})\phi$ as an element in $L^2(\mathbb{R}_+)$.

\begin{definition}\label{EQ89}
For any $\phi \in \mathcal{M}$, we define
$R(\lambda; A_{\xi}) \phi$ to be an element in $L^2(\mathbb{R}_+)$ such that
  \begin{align}
   \langle R(\lambda ; A_{\xi})\phi, f  \rangle_{L^2(\mathbb{R}_+)}= \langle \phi ,  R({\lambda} ; A_{\xi}) f \rangle_{\mathcal{M} \times C_{\text{b}}(\mathbb{R}_{+})} ,
   \llabel{EQ90}
  \end{align}
for all $f \in L^2(\mathbb{R}_+)$ and~$\lambda \in \rho(A_{\xi})$.
\end{definition}

\medskip
Since $S_{\xi}(t)$ is an analytic semigroup, we have $S_{\xi}(t) f \in D(A_{\xi})$ for $f \in L^2(\mathbb{R}_+)$ and $t>0$, and hence  $S_{\xi}(t)f \in L^{\infty}(\mathbb{R}_+)$. Therefore,
for $\phi\in\mathcal{M}$,
we may similarly define $S_{\xi}(t)\phi$ as an element in $L^2(\mathbb{R}_+)$ as follows.

\begin{definition}\label{semigroup distribution}
For any $\phi\in \mathcal{M}$
and $t>0$, we define $S_{\xi}(t) \phi$ as an element in $L^2(\mathbb{R}_+)$ such that
\begin{align}
  \begin{split}
  \langle S_{\xi}(t) \phi , f\rangle_{L^2(\mathbb{R}_+)} = \langle \phi, S_{\xi}(t) f \rangle_{\mathcal{M} \times C_{\text{b}}(\mathbb{R}_{+})} .
  \end{split}
  \llabel{EQ92}
  \end{align}
for any $f \in L^2(\mathbb{R}_+)$.
\end{definition}

\begin{remark}
Due to analyticity of the semigroup, we immediately obtain that
$S_\xi(t) f \in C^\infty(\mathbb{R}_{+})$ for $t>0$ and~$f\in L^2$. 
\end{remark}

\startnewsection{Green's Function}{sec04}

\subsection{Analytic representation of Green's function}
Now, we introduce Green's function
  \begin{equation}
   G_{\lambda,\xi}(z):= R(\lambda; A_{\xi})\diag{(\d_z, \d_z})
   .
   \label{EQ233}
  \end{equation}
From Definition~\ref{EQ89}, we have
  \begin{align}
    \begin{split}
  \langle G_{\lambda, \xi}(y), f \rangle_{} = \langle \diag{(\d_y, \d_y)},  R(\lambda;A_{\xi}) f \rangle
      \comma f \in (L^{2}(\mathbb{R}_+))^2
   ,
  \end{split}
   \llabel{EQ91}
  \end{align}
where the duality $\mathcal{M}$-$C_{\text{b}}(\mathbb{R}_{+})$ is understood.
By \eqref{EQ58} and \eqref{EQ68}, we have
  \begin{equation}
   \langle G_{\lambda, \xi}(y), f \rangle
   =
    \frac{1}{2 \nu \mu}
   \int_{0}^{\infty} (e^{-\mu |y-z|} + e^{-\mu|y+z|}) f(z) dz + \int_0^{\infty} M(z) f(z) e^{-\mu y}\,dz
   \label{EQ93}
     \end{equation}
where $M(z) \in \mathbb{R}^{2 \times 2}$ solves the equation
\begin{align}
  \begin{split}
   BM f(y)
   &= \frac{1}{2 \nu \mu}(\partial_{z}+|\xi|)(e^{-\mu |y-z|} + e^{-\mu |y+z|})f |_{z=0}
   \\&\indeq
   -   \frac{1}{2 \nu \mu}\xi|\xi|^{-1}\xi\cdot (e^{-\mu |y-z|} + e^{-\mu |y+z|})f |_{z=0} ,
  \end{split}
   \llabel{EQ94}
     \end{align}
and $B$ is given in \eqref{EQ73}.
Hence, we obtain 
  \begin{align}
  \begin{pmatrix}
  \mu-|\xi| + \xi_1^2 |\xi|^{-1} & \xi_1\xi_2 |\xi|^{-1}\\
  \xi_1\xi_2 |\xi|^{-1} & \mu -|\xi| + \xi_2^2 |\xi|^{-1}\\
  \end{pmatrix}
  M(z) f
  = \frac{e^{-\mu z}}{\nu\mu}
  \begin{pmatrix}
  (\xi_2^2 f_1 - \xi_1 \xi_2 f_2)|\xi|^{-1}\\
  (-\xi_1\xi_2 f_1+ \xi_1^2 f_2)|\xi|^{-1}
  \end{pmatrix}
  \llabel{EQ96}
  \end{align}
for any $f\in \mathbb{R}^2$,
which implies that
  \begin{align}
  \begin{split}
  \begin{pmatrix}
  \mu |\xi| - \xi_2^2  &  \xi_1\xi_2 \\
  \xi_1\xi_2  & \mu|\xi| - \xi_1^2 \\
  \end{pmatrix}
  M(z)
  = \frac{e^{-\mu z}}{\nu\mu}
  \begin{pmatrix}
      \xi_2^2 &  -\xi_1 \xi_2\\
  -\xi_1\xi_2 & \xi_1^2 
  \end{pmatrix}.
  \end{split}
   \label{EQ97}
   \end{align}
Computing $M(z)$ from \eqref{EQ97} then leads to
  \begin{align}
  \begin{split}
   M(z)
   &=
   \frac{e^{-\mu z}}{\nu\mu}    
    \frac{1}{\mu|\xi|^2(\mu-|\xi|)}
  \begin{pmatrix}
  \mu |\xi|-\xi_1^2 & - \xi_1\xi_2 \\ - \xi_1\xi_2 & \mu|\xi|-\xi_2^2
  \end{pmatrix}
  \begin{pmatrix}
  \xi_2^2 & - \xi_1\xi_2 \\ - \xi_1\xi_2 & \xi_1^2 
  \end{pmatrix}
    \\&
   =
   \frac{e^{-\mu z}}{\nu\mu} 
    \frac{1}{|\xi|(\mu-|\xi|)}
  \begin{pmatrix}
  \xi_2^2 & - \xi_1\xi_2 \\ - \xi_1\xi_2 & \xi_1^2 
  \end{pmatrix}
   ,
  \end{split}
   \llabel{EQ235}
  \end{align}
and replacing this in \eqref{EQ93}, we get
  \begin{align}
    \begin{split}
      \langle G_{\lambda, \xi}(y), f \rangle &=  \int_{0}^{\infty}\frac{1}{2 \nu \mu} (e^{-\mu |y-z|} + e^{-\mu|y+z|})   \begin{pmatrix}f_1(z)\\ f_2(z) \end{pmatrix} dz
    \\&\indeq
    + \int_0^{\infty} \frac{1}{\nu \mu} e^{-\mu|y+z|}
    \frac{1}{|\xi|(\mu-|\xi|)}
  \begin{pmatrix}
  \xi_2^2 & - \xi_1\xi_2 \\ - \xi_1\xi_2 & \xi_1^2 
  \end{pmatrix} 
  \begin{pmatrix}
  f_1(z)\\
  f_2(z)
  \end{pmatrix}
  \,dz
  .
  \end{split}
   \llabel{EQ98}
  \end{align}
Finally, we note that
\begin{equation}
\frac{1}{\nu \mu|\xi|(\mu-|\xi|)} = \frac{\mu+ |\xi|}{ \mu |\xi| (\nu \mu^2- \nu|\xi|^2)}= \frac{\mu+|\xi|}{\mu |\xi|\lambda }
   .
   \llabel{EQ99}
     \end{equation}
Thus we obtain the following result.

\cole
\begin{lemma}\label{l concrete resolvent}
Let $\mu= \nu^{-1/2} \sqrt{\lambda + |\xi|^2 \nu}$ where
$\lambda \in \mathbb{C}\setminus(-\infty, -|\xi|^2 \nu]$
and $\lambda\neq0$. There holds
  \begin{align}
    \begin{split}
  \langle G_{\lambda, \xi}(y), f \rangle_{} = \int_0^{\infty}G_{\lambda,\xi} (y,z) f(z) \,dz
      \comma f\in C_{0}^{\infty}(\mathbb{R}_{+})
  ,
  \end{split}
   \llabel{EQ100}
  \end{align}
and the kernel $G_{\lambda,\xi}(z,y)$ is given by 
\begin{equation}
G_{\lambda,\xi} (y,z) = H_{\lambda,\xi} (y,z) + R_{\lambda,\xi}(y,z)
   ,
   \llabel{EQ101}
     \end{equation}
where
  \begin{equation}
  H_{\lambda, \xi}(y,z) =  \frac{1}{2 \nu \mu} (e^{-\mu |y-z|} + e^{-\mu|y+z|}) I_2
  \llabel{EQ102}
  \end{equation}
and
  \begin{equation}
   R_{\lambda, \xi}(z,y)=\frac{\mu+ |\xi|}{ \mu  \lambda |\xi|}
  \begin{pmatrix}
  \xi_2^2 & -\xi_1\xi_2\\
  -\xi_1\xi_2 & \xi_1^2
  \end{pmatrix}
  e^{-\mu|y+z|}
   ,
   \label{EQ236}
  \end{equation}
with $I_2$ denoting the $2\times2$ identity matrix.
\end{lemma}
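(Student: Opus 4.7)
The plan is to extract the integral kernel of the resolvent operator directly from the explicit formulas for $v$ and $w$ obtained in Section~\ref{S3.1}. First, I would use Definition~\ref{EQ89} together with the symmetry of $R(\lambda; A_\xi)$ from Lemma~\ref{L04} to recast the pairing as
$$\langle G_{\lambda,\xi}(y), f\rangle = \langle \diag(\delta_y,\delta_y), R(\lambda; A_\xi) f\rangle = (R(\lambda; A_\xi) f)(y)$$
for $f \in C_0^\infty(\mathbb{R}_+)$, so that the problem reduces to rewriting $v(y) + w(y)$ as an integral against a kernel $G_{\lambda,\xi}(y,z)$.

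The $v$-contribution from \eqref{EQ58} is already in kernel form, and yields the $H_{\lambda,\xi}$ piece directly, with the identity matrix $I_2$ appearing because \eqref{EQ58} acts componentwise on the two-component vector $f$. For the $w$-contribution, I would write the coefficient $c_0$ from \eqref{EQ68} as $\int_0^\infty M(z) f(z)\,dz$, where $M(z)$ is the $2\times 2$ matrix determined by evaluating the boundary operator $(\partial_y + |\xi|) - \xi|\xi|^{-1}\xi\,\cdot$ applied to the $v$-kernel at $y = 0$. A short computation shows that the two exponential derivatives cancel at $y = 0$, so the right-hand side of the defining equation reduces to $(|\xi| I_2 - \xi|\xi|^{-1}\xi^T)\tfrac{e^{-\mu z}}{\nu\mu}$, which is exactly equation \eqref{EQ97} for $M(z)$.

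The remaining work is purely algebraic: I would solve \eqref{EQ97} by multiplying through by the cofactor matrix of $|\xi| B$, observe that the product of the two resulting rank-one matrices collapses to a scalar multiple of $|\xi|^2 I_2 - \xi\xi^T$ (using the identity $(|\xi|^2 I - \xi\xi^T)^2 = |\xi|^2(|\xi|^2 I - \xi\xi^T)$), and then simplify the scalar prefactor via
$$\frac{1}{\nu \mu |\xi|(\mu - |\xi|)} = \frac{\mu + |\xi|}{\mu |\xi|\lambda},$$
which comes from $\lambda = \nu(\mu^2 - |\xi|^2)$. Substituting this into $M(z) e^{-\mu y}$ yields precisely \eqref{EQ236}. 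I expect the main obstacle to be keeping the $2\times 2$ matrix bookkeeping transparent, since the clean rank-one form of $R_{\lambda,\xi}$ only emerges after one explicitly identifies the cancellation between the cofactor matrix of $|\xi| B$ and the matrix $|\xi|^2 I - \xi\xi^T$ appearing on the right-hand side.
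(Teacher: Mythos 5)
Your proposal is correct and follows the same path as the paper: unwind Definition~\ref{EQ89} to identify $\langle G_{\lambda,\xi}(y),f\rangle$ with $(R(\lambda;A_\xi)f)(y)$, read off $H_{\lambda,\xi}$ from the $v$-kernel in \eqref{EQ58}, set up the matrix equation \eqref{EQ97} for $M(z)$ from the boundary condition, and simplify using $\lambda=\nu(\mu^2-|\xi|^2)$. Your use of the projection identity $(|\xi|^2 I_2-\xi\xi^T)^2=|\xi|^2(|\xi|^2 I_2-\xi\xi^T)$ to collapse the product is a slightly tidier way of carrying out the $2\times2$ algebra that the paper does by direct multiplication in \eqref{EQ235}, but it lands in exactly the same place.
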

\colb

Next, we calculate the time-dependent
kernel $G_{\xi}(t,y)= S_{\xi}(t)\d_y$, where
  $t>0$. By Definition \ref{semigroup distribution}, we have
  \begin{align}
  \begin{split}
\langle S_{\xi}(t)\d_y , f \rangle_{}=& \langle \d_y , S_{\xi}(t) f \rangle   \comma f \in L^2(\mathbb{R}_+).
  \end{split}
   \llabel{EQ103}
  \end{align}
Using Definitions~\ref{EQ89} and~\ref{semigroup distribution}, we obtain
  \begin{equation}
  \begin{aligned}
&\langle G_{\xi}(t,y) , f \rangle= \langle S_{\xi}(t) \d_y, f \rangle=  \langle  \d_y, S_{\xi}(t) f \rangle
=  \int_{\Gamma} e^{\lambda t} \langle  \d_y , R(\lambda; A_{\xi}) f  \rangle_{}\,d \lambda
= \int_{\Gamma} e^{\lambda t} \langle   R(\lambda; A_{\xi})\d_y ,  f  \rangle_{}\,d \lambda,
\end{aligned}
   \llabel{EQ104}
     \end{equation}
where $\Gamma$ is a path as in after~\eqref{EQ74}.
By Lemma~\ref{l concrete resolvent} and Fubini's Theorem, we arrive at
\begin{align}
  \begin{split}
   \langle G_{\xi}(t,y) , f \rangle =& \int_{\Gamma} e^{\lambda t}  \int_0^{\infty}G_{\lambda,\xi} (y,z)  f(z) \,dz  \,d \lambda
  =
   \int_0^{\infty}
   \left(\int_{\Gamma} e^{\lambda t } G_{\lambda,\xi} (y,z)  \,d\lambda\right) f(z)   \,dz.
  \end{split}
   \llabel{EQ105}
 \end{align}
Defining
  \begin{equation}\label{eq green}
   G_{\xi}(t,y;z):=  \int_{\Gamma} e^{\lambda t } G_{\lambda,\xi} (y,z)\,d\lambda,
  \end{equation}
we have the following theorem.

\cole
\begin{theorem}\label{T2}
The kernel $G_{\xi}(t,y;z)$ has the  form
  \begin{align}
   G_{\xi}(t,y;z)= H_{\xi}(t,y;z) + R_{\xi}(t,y;z),
   \label{EQ106}
  \end{align}
where
  \begin{align}\label{EQ107}
    H_\xi(t, y ,z) = \frac{1}{\sqrt{\nu t}}\left(e^{-\frac{|z-y|^2}{4\nu t}} + e^{-\frac{|z+y|^2}{4\nu t}} \right)e^{-\nu|\xi|^2t} I_{2}
   \end{align}
and
\begin{align}\label{EQ108}
R_{\xi}(t,y;z)= \int_{\Gamma} e^{\lambda t} \frac{\mu+ |\xi|}{ \mu  \lambda |\xi|} 
\begin{pmatrix}
\xi_2^2 & -\xi_1\xi_2\\
-\xi_1\xi_2 & \xi_1^2
\end{pmatrix}
 e^{-\mu|y+z|}\,d\lambda,
\end{align}
with $\mu= \nu^{-1/2} \sqrt{\lambda + |\xi|^2 \nu}$
and $I_2$ denoting the $2\times2$ identity matrix.
The matrix function \eqref{EQ106} solves the system
\begin{subequations}
  \begin{align}
    \partial_{t}G_{\xi}(t,y; z) - \nu\Delta_\xi G_{\xi}(t,y; z)&= 0\label{EQ109}
    \commaone t>0\\
    -\nu(\partial_{z}+|\xi|)G_{\xi}(t,y;z)|_{z=0} + \nu \xi|\xi|^{-1}\xi\cdot G_{\xi}(t,y;z)|_{z=0}&=0\label{EQ110},\\
    G_{\xi}(t,y;z)|_{t=0} &{= \d_y(z)I_2}
    \comma z>0
    ,
 \label{EQ111}
\end{align}
for~$y>0$.
\end{subequations}
\end{theorem}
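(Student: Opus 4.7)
The plan is to combine Lemma~\ref{l concrete resolvent} with the contour integral representation \eqref{eq green}. Inserting the decomposition $G_{\lambda,\xi}(y,z) = H_{\lambda,\xi}(y,z) + R_{\lambda,\xi}(y,z)$ from Lemma~\ref{l concrete resolvent} into \eqref{eq green} immediately yields \eqref{EQ106}, where $R_\xi$ is given by \eqref{EQ108} through direct substitution of the formula \eqref{EQ236}. Hence the work reduces to (i) evaluating the contour integral for $H_\xi$, and (ii) verifying that $G_\xi$ solves \eqref{EQ109}--\eqref{EQ111}.

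For step (i), I would compute
\begin{equation*}
H_\xi(t,y,z) = \int_\Gamma e^{\lambda t}\, \frac{1}{2\nu\mu}\bigl(e^{-\mu|y-z|} + e^{-\mu|y+z|}\bigr) I_2\, d\lambda,
\end{equation*}
by recognizing that $H_{\lambda,\xi}$ is, up to normalization, the resolvent kernel of $\nu\partial_z^2 - \nu|\xi|^2$ on $\mathbb{R}_+$ with Neumann boundary condition (and is the Green's function of $-\nu\Delta_\xi + \lambda$ on $\mathbb{R}$ restricted by even reflection). Changing variables via $s = \lambda + \nu|\xi|^2$, so that $\mu = \sqrt{s/\nu}$, converts the integral into $e^{-\nu|\xi|^2 t}$ times two Bromwich integrals of the form $\int_{\Gamma'} e^{st} s^{-1/2} e^{-a\sqrt{s}}\, ds$ with $a = |y \mp z|/\sqrt{\nu}$. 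The classical Laplace inversion $\int_{\Gamma'} e^{st} s^{-1/2} e^{-a\sqrt{s}}\,ds = C\, t^{-1/2} e^{-a^2/(4t)}$ then yields the Gaussian form \eqref{EQ107}, with the overall constant absorbed into the contour normalization used in \eqref{EQ74}.

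For step (ii), I would differentiate under the contour integral: for $t>0$, $\partial_t G_\xi = \int_\Gamma \lambda e^{\lambda t} G_{\lambda,\xi}\, d\lambda$, and since $G_{\lambda,\xi}(\cdot, z)$ is the resolvent kernel, $\lambda G_{\lambda,\xi} = \nu\Delta_\xi G_{\lambda,\xi}$ in the interior $y \neq z$, giving \eqref{EQ109}. The boundary condition \eqref{EQ110} is inherited from the fact that $G_{\lambda,\xi}(\cdot,z)$ lies in $D(A_\xi)$ for each fixed $z$, a property that survives passage through the contour integral. The initial data \eqref{EQ111} follows from the identification $G_\xi(t,y;z) = S_\xi(t)\delta_z(y) I_2$ together with strong continuity of the semigroup $S_\xi(t)$, extended to $\mathcal{M}$ via Definition~\ref{semigroup distribution}.

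The main technical obstacle will be the Bromwich inversion: one must fix the correct branch of $\mu = \nu^{-1/2}\sqrt{\lambda + \nu|\xi|^2}$ along $\Gamma$ so that $\re \mu > 0$ (which is what produces the decaying exponentials $e^{-\mu|y\pm z|}$) and verify absolute convergence of the contour integral after the change of variables. A secondary difficulty is interpreting the limit $t \to 0^+$ in \eqref{EQ111} rigorously; this must be done in the distributional sense, testing against $C_{\mathrm{b}}(\mathbb{R}_+)$ functions and invoking the $\mathcal{M}$-extension of $S_\xi$ established in Section~\ref{secGreen}.
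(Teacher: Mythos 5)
Your proposal is correct and produces the same decomposition, but in two places it takes a genuinely different route from the paper's own argument. First, for the heat-kernel piece $H_\xi$, you propose an explicit Bromwich inversion (change of variables $s=\lambda+\nu|\xi|^2$, recognize the classical Laplace transform of $t^{-1/2}e^{-a^2/(4t)}$); the paper instead simply appeals to the classical Neumann heat-kernel formula on $\mathbb{R}_+$, with no contour computation. Your route is more self-contained, at the cost of having to track the branch of $\mu$ and the normalization constant, which you correctly flag. Second, and more substantively, for the PDE \eqref{EQ109} you differentiate under the contour integral and invoke the pointwise identity $\lambda G_{\lambda,\xi}=\nu\Delta_\xi G_{\lambda,\xi}$ off the diagonal $y\ne z$; the paper instead runs a duality argument, pairing $G_\xi(t,y;\cdot)$ against $f\in C_0^\infty(\mathbb{R}_+)$, integrating $A_\xi$ by parts onto $f$ (using the symmetry of the resolvent, Lemma~\ref{L04}), and then invoking the semigroup ODE $\partial_t S_\xi(t)f=A_\xi S_\xi(t)f$ for the classical $L^2$ solution. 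The paper's weak formulation sidesteps the $\delta_z$ singularity entirely and never needs to justify moving $\partial_t$ or $\Delta_\xi$ through the contour integral pointwise; your version would require verifying that $\int_\Gamma \lambda e^{\lambda t} G_{\lambda,\xi}(y,z)\,d\lambda$ converges absolutely and that the delta contribution drops out because $\int_\Gamma e^{\lambda t}\,d\lambda=0$ for $t>0$, which is true on the chosen contour but left implicit in your sketch. For the boundary condition \eqref{EQ110} and the initial condition \eqref{EQ111}, your approach agrees with the paper's (the former by direct computation on the explicit kernel; the latter via the $\mathcal{M}$-extension of $S_\xi$ and strong continuity tested against $C_{\text{b}}$).
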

\colb

\begin{proof}[Proof of Theorem~\ref{T2}]
We get the equation~\eqref{EQ107} by solving the heat equation with the homogeneous Neumann condition, while \eqref{EQ108} follows directly from~\eqref{EQ236} and \eqref{eq green}. By Lemma~\ref{l concrete resolvent}, both $G_{\xi}$ and $H_{\xi}$ are continuous with respect to $z$, and \eqref{EQ110} holds. To prove \eqref{EQ109}, it suffices to check that $\langle \partial_t G_{\xi}(t,y) - A_{\xi} G_{\xi}(t,y), f  \rangle=0$, for any $f \in C_0^{\infty}(\mathbb{R}_+)$.
Indeed, integration by parts gives
\begin{align}
  \begin{split}
  \langle A_{\xi} G_{\xi}(t,y),f  \rangle_{}= \int_0^{\infty} (-\xi^2 + \partial_z^2) G_{\xi}(t,y;z) f(z) \,dz
  =  \int_0^{\infty}  G_{\xi}(t,y;z)(-\xi^2 + \partial_z^2) f(z) \,dz.
  \end{split}
   \llabel{EQ115}
  \end{align}
Hence, we have
\begin{equation}
\langle \partial_t G_{\xi}(t,y)- A_{\xi} G_{\xi}(t,y),f  \rangle= \langle \partial_t S_{\xi}(t)\d_y , f  \rangle_{} -\langle   G_{\xi}(t,y), \Delta_{\xi} f  \rangle
   .
   \llabel{EQ116}
     \end{equation}
By the absolute convergence of the integral
  \begin{equation}
  \int_0^{\infty} \int_{\Gamma} \lambda e^{\lambda t } G_{\lambda,\xi} (y,z)  \,d\lambda f(z)  \,dz,
   \llabel{EQ117}
     \end{equation}
we have
\begin{equation}
 \langle \partial_t S_{\xi}(t)\d_y , f  \rangle = \langle  \d_y , \partial_t S_{\xi}(t) f  \rangle
   .
   \llabel{EQ118}
     \end{equation}
Hence, we obtain 
  \begin{align}
    \begin{split}
  \langle \partial_t G_{\xi}(t,y) - A_{\xi} G_{\xi}(t,y),f  \rangle=& \langle  \d_y , \partial_t S_{\xi}(t) f  \rangle- \langle  S_{\xi}(t) \d_y, A_{\xi}f  \rangle\\
=& \langle  \d_y , \partial_t S_{\xi}(t) f  \rangle - \langle   \d_y, A_{\xi}S_{\xi}(t) f  \rangle.
  \end{split}
   \llabel{EQ119}
  \end{align}
By 
the analytic semigroup property of $S_{\xi}$ in $L^2$, there holds
\begin{equation}
\partial_t S_{\xi}(t)f -A_{\xi} S_{\xi}(t)f=0
,
   \llabel{EQ120}
     \end{equation}
from where
\begin{equation} 
\langle  \d_y , \partial_t S_{\xi}(t) f - A_{\xi}S_{\xi}(t) f  \rangle=0,
   \llabel{EQ121}
     \end{equation}
establishing \eqref{EQ109}.

The boundary condition \eqref{EQ110} follows by a direct computation, and we omit further details. 

Finally, we check~\eqref{EQ111}. For $f \in C_0^{\infty}(\mathbb{R}_+)$, we have
\begin{equation}
\langle S_{\xi}(t) \d_y , f \rangle := \langle \d_y , S_{\xi}(t)  f \rangle
\aand
\lim_{t\to 0} S_{\xi}(t)f =f
,
   \llabel{EQ242}
\end{equation}
by Definition \ref{semigroup distribution},
where the convergence holds uniformly on compact sets.
Therefore, we have
\begin{equation}
\lim_{t \to 0}  \langle S_{\xi}(t) \d_y , f \rangle = f(y),
   \llabel{EQ243}
\end{equation}
which implies \eqref{EQ111}.
\end{proof}

\subsection{An estimate for Green's function}
In this section, we prove a result of Nguyen-Nguyen type, providing an upper bound for Green's function.

\cole
\begin{theorem}\label{T3}
The residual kernel $R_{\xi}$
may be decomposed as 
  \begin{equation}
    R_{\xi}= R^{(1)}_{\xi}+ R^{(2)}_{\xi}   
   \label{EQ112}
  \end{equation}
where $R^{(1)}$ and $R^{(2)}$ are defined in 
\eqref{EQ240} and \eqref{EQ241} below,
with the two kernels satisfying the bounds
  \begin{equation}
    |\partial_{z}^{k}R^{(1)}_\xi(t, y, z)|
    \lesssim \mu_0^{k+1} e^{-\theta_0\mu_0(y+z)} 
    \comma k\in{\mathbb N}_0
    \label{EQ128}
  \end{equation}
  and
  \begin{equation}
    |\partial_{z}^{k} R^{(2)}_\xi(t, y,z) |
    \lesssim \frac{1}{(\nu t)^{(k+1)/2}}
    e^{-\frac{(y+z)^2}{\nu t}}e^{-\frac{\nu|\xi|^2t}{8}}
    \comma k\in{\mathbb N}_0
    \label{EQ129}
  \end{equation}
  where 
  $\theta_0>0$ is a constant and
  the boundary remainder coefficient 
  is given by $\mu_0=\mu_0(\xi, \nu)= |\xi|+\sqrt{\nu}^{-1}$.
The implicit constants in \eqref{EQ128} and \eqref{EQ129} depend on~$k$.
\end{theorem}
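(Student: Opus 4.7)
The plan is to change variables in the Bromwich representation \eqref{EQ108} from $\lambda$ to $\mu=\nu^{-1/2}\sqrt{\lambda+\nu|\xi|^2}$, and then split the resulting $\mu$-contour into a bounded low-frequency part responsible for the boundary-layer bound \eqref{EQ128} and an unbounded high-frequency part responsible for the Gaussian bound \eqref{EQ129} via steepest descent. This follows the two-dimensional strategy of Nguyen and Nguyen~\cite{NN} adapted to the present $\mu_0$-scale.

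First, the change of variables. Using $\lambda=\nu(\mu^2-|\xi|^2)$, $d\lambda=2\nu\mu\,d\mu$, and the identity
\begin{equation}
\frac{\mu+|\xi|}{\mu\lambda|\xi|}=\frac{1}{\nu\mu|\xi|(\mu-|\xi|)},
\end{equation}
the representation \eqref{EQ108} takes the form
\begin{equation}
R_\xi(t,y,z)=\frac{2M_\xi}{|\xi|}\int_{\tilde\Gamma}\frac{e^{\nu(\mu^2-|\xi|^2)t-\mu(y+z)}}{\mu-|\xi|}\,d\mu,
\end{equation}
where $M_\xi$ denotes the Fourier-multiplier matrix in \eqref{EQ108} and $\tilde\Gamma$ is a Hankel-type contour in the right half $\mu$-plane that encircles the simple pole at $\mu=|\xi|$ and escapes to $\pm i\infty$.

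Second, I decompose $\tilde\Gamma=\tilde\Gamma_1\cup\tilde\Gamma_2$, where $\tilde\Gamma_1$ is a bounded portion contained roughly in $\{|\mu|\leq C\mu_0\}$ and enclosing the pole $\mu=|\xi|$, while $\tilde\Gamma_2$ consists of the two unbounded tails with $|\mu|\geq C\mu_0$. Setting $R^{(1)}_\xi$ and $R^{(2)}_\xi$ to be the corresponding integrals will yield the definitions in \eqref{EQ240} and \eqref{EQ241}. For $R^{(1)}_\xi$, I arrange $\tilde\Gamma_1$ so that $\mathrm{Re}\,\mu\geq\theta_0\mu_0$ and $\mathrm{Re}(\mu^2)\leq|\xi|^2$ along its length; this yields $|e^{\nu(\mu^2-|\xi|^2)t-\mu(y+z)}|\leq e^{-\theta_0\mu_0(y+z)}$, and together with the fact that $\tilde\Gamma_1$ has length $O(\mu_0)$, the residue at $\mu=|\xi|$ contributes the dominant term of size $|\xi|^ke^{-|\xi|(y+z)}\lesssim\mu_0^{k+1}e^{-\theta_0\mu_0(y+z)}$. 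Each $z$-derivative brings down a factor of $\mu$ with $|\mu|\leq C\mu_0$, which gives the bound \eqref{EQ128}. For $R^{(2)}_\xi$ on $\tilde\Gamma_2$, I apply the steepest descent method: the phase $\nu\mu^2t-\mu(y+z)$ has a saddle at $\mu^\ast=(y+z)/(2\nu t)$ with critical value $-(y+z)^2/(4\nu t)$. Deforming $\tilde\Gamma_2$ through (or close to) $\mu^\ast$ along the descent direction, combined with the ambient factor $e^{-\nu|\xi|^2t}$, produces the bound \eqref{EQ129}. The constant $1/8$ in the time-decay exponent absorbs the losses incurred when the contour cannot pass exactly through $\mu^\ast$ and from matching the split at $|\mu|=C\mu_0$. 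Each derivative in $z$ contributes a factor of size $O(\mu_0+(\nu t)^{-1/2})$, part of which is absorbed into the Gaussian and part into the prefactor.

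The main obstacle is the explicit design of the contour $\tilde\Gamma=\tilde\Gamma_1\cup\tilde\Gamma_2$ so that several competing constraints are simultaneously met: (i) the residue at $\mu=|\xi|$ is captured only by $R^{(1)}_\xi$; (ii) the steepest-descent contour for $R^{(2)}_\xi$ stays in the region of analyticity, bounded away from the pole and from the branch cut $\mu\in i\mathbb{R}$; and (iii) the estimates hold uniformly across the regimes $\mu^\ast\ll\mu_0$ (when the saddle lies inside the low-frequency disk and only $R^{(1)}_\xi$ contributes essentially) and $\mu^\ast\gg\mu_0$ (when the saddle is far from the pole). One thus has to consider several cases based on the relative sizes of $\mu^\ast$, $\mu_0$, and $|\xi|$, and adjust the contour shape in each case while maintaining uniform constants. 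The bookkeeping for higher derivatives, where each factor $\mu^k$ must be optimally distributed between the boundary-layer exponential in \eqref{EQ128} and the Gaussian in \eqref{EQ129}, is the remaining delicate point.
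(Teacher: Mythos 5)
Your overall strategy---a Bromwich-type representation in the variable $\mu$, split into a bounded piece near the pole and unbounded steepest-descent tails through the saddle $\mu^\ast=(y+z)/(2\nu t)$---is the same as the paper's (the paper's parabolic contours $\lambda=-\tfrac12\nu|\xi|^2+\nu(a^2-b^2)+2\nu i a b$ are exactly $\mu\approx a+ib$ with $a=|y+z|/(2\nu t)$, just written in $\lambda$). However, your description of $R^{(1)}_\xi$ contains a genuine error in the regime $\nu|\xi|^2\lesssim1$. You ask $\tilde\Gamma_1$ to \emph{both} encircle the pole at $\mu=|\xi|$ \emph{and} satisfy $\mathrm{Re}\,\mu\geq\theta_0\mu_0$ along its length, and you then claim the residue contributes $|\xi|^{k}e^{-|\xi|(y+z)}\lesssim\mu_0^{k+1}e^{-\theta_0\mu_0(y+z)}$. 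When $\nu|\xi|^2$ is small, we have $\mu_0=|\xi|+\nu^{-1/2}\sim\nu^{-1/2}\gg|\xi|$, so the pole sits at $\mathrm{Re}\,\mu=|\xi|<\theta_0\mu_0$; a contour confined to $\{\mathrm{Re}\,\mu\geq\theta_0\mu_0\}$ cannot enclose it, and, separately, $e^{-|\xi|(y+z)}\not\lesssim e^{-\theta_0\mu_0(y+z)}$ since that would require $|\xi|\geq\theta_0\mu_0$. So your stated contour constraints are mutually inconsistent exactly where the residue bound you invoke breaks down.

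The paper resolves this by splitting on $\nu|\xi|^2\lessgtr1$. For $\nu|\xi|^2\leq1$, the contour stays strictly to the right of $\lambda=0$ (equivalently to the right of $\mu=|\xi|$), so \emph{no residue is collected}; the bounded arc $\Gamma_{\rm c}$ has radius $M\gtrsim1$ so that $|\lambda|\gtrsim1$ and hence $\mathrm{Re}\,\mu\geq\theta_0\mu_0+a/2$ automatically, yielding \eqref{EQ128} without any pole contribution. Only when $\nu|\xi|^2\geq1$ does the parabolic contour possibly sweep past $\lambda=0$, and then the residue term $\sim|\xi|e^{-|\xi|(y+z)}$ is separated out as $R^{(1)}_\xi$; there $|\xi|\geq\tfrac12\mu_0$, so the bound $e^{-|\xi|(y+z)}\leq e^{-\frac12\mu_0(y+z)}$ holds. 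Your remark at the end that ``one thus has to consider several cases based on the relative sizes of $\mu^\ast$, $\mu_0$, and $|\xi|$'' points in the right direction, but the split the proof actually needs is on $\nu|\xi|^2$ versus $1$ (equivalently $|\xi|$ versus $\nu^{-1/2}$), and in the low-frequency case you must design $\tilde\Gamma_1$ so that it does \emph{not} enclose the pole; otherwise the bound \eqref{EQ128} is unattainable as a matter of arithmetic.
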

\colb

\begin{proof}[Proof of Theorem~\ref{T3}]
We first consider the case when $\nu |\xi|^2 \leq 1 $ and,
as in \cite{NN},
decompose the contour as $\Gamma:= \Gamma_{-}\cup \Gammac\cup \Gamma_{+}$, where
  \begin{align}
  \begin{split}
   &
   \Gamma_{-}:= \left\{ \lambda= -\frac{1}{2}\nu |\xi|^2 + \nu (a^2-b^2) + 2 \nu i ab - i M,\ b \in (-\infty,0]  \right\},
   \\&
    \Gammac:= \left\{ \lambda =-\frac{1}{2}\nu|\xi|^2 + \nu a^2 + Me^{i\theta}, \  \theta\in [-\pi/2, \pi/2]  \right\},
\\&
      \Gamma_{+}:= \left\{ \lambda= -\frac{1}{2}\nu |\xi|^2 + \nu (a^2-b^2) + 2 \nu i a b +  i M,\  b \in [0,\infty)  \right\},
  \end{split}
   \llabel{EQ237}
  \end{align}
where $a=\fractext{|y+z|}{2\nu t}$
and $M>0$ is large enough so that $0$ is on the left-hand side of the contour $\Gamma$.

We first consider the integral on~$\Gammac $.
Recalling \eqref{EQ80},
we have
\begin{equation}
\re \mu \geq \sqrt{\frac{\re \lambda}{\nu} + |\xi|^2}\geq \frac{\sqrt{\re \lambda}}{2}\left(|\xi| + \frac{1}{\sqrt{\nu}}\right)
   \llabel{EQ132}
     \end{equation}
and
  \begin{equation}
   \re \mu \geq \sqrt{\re \lambda /\nu} \geq a   
   .
   \llabel{EQ238}
  \end{equation}
Since $|\lambda |\gtrsim 1$ on $\Gammac $,  there exists a constant $\theta_0$, depending on the choice of $\Gammac$, such that
\begin{equation}
\re \mu \geq \theta_0 \mu_0 + \frac{a}{2}.
   \llabel{EQ133}
     \end{equation}
For the case $\nu|\xi|^{2}\leq 1$, we set
  \begin{equation}
   R^{(1)}_\xi(t, y, z)
   =
   \int_{\Gammac} e^{\lambda t} R_{\xi}( \lambda ,y,z) \,d\lambda
   \label{EQ240}
  \end{equation}
and
  \begin{equation}
   R^{(2)}_{\xi}
   =
   \int_{\Gamma_{-}\cup\Gamma_{+}}
   e^{\lambda t} R_{\xi}( \lambda ,y,z) \,d\lambda
   .
   \label{EQ241}
  \end{equation}
Then we have
  \begin{equation}
   \left| R^{(1)}_\xi(t, y, z)\right|
   \lesssim \int_{\Gammac}  e^{Mt} e^{\nu a^2 t}e^{-\frac{a}{2}|y+z|} e^{-\theta_0 \mu_0 |y+z|}\left| \frac{\mu+ |\xi|}{ \mu  }\right| |\xi| \,|d\lambda|
  .
   \llabel{EQ134}
     \end{equation}
Recalling that $\mu= \nu^{-1/2} \sqrt{\lambda + |\xi|^2 \nu}$ from \eqref{EQ80}, we bound
  \begin{equation}
   \left|\frac{\mu+ |\xi|}{ \mu  } |\xi|\right| \leq |\xi| + \frac{|\xi|^2}{|\mu|}
    = |\xi| + |\xi| \left|\sqrt{\frac{\nu |\xi|^2}{\lambda + |\xi|^2 \nu}}\right| \lesssim \mu_0
   ,
   \llabel{EQ135}
     \end{equation}
obtaining
\begin{equation}
\left|\int_{\Gammac} e^{\lambda t} R_{\xi}(t,y,z) \,d\lambda \right |  \lesssim \mu_0 e^{\theta_0 \mu_0 |y+z|}
   .
   \llabel{EQ136}
     \end{equation}
For the integrals on $\Gamma_{-}$ and $\Gamma_{+}$, we have
\begin{equation}
  \left| e^{\lambda t} e^{-\mu|y+z|} \right|
  \leq e^{-\frac{1}{2}\nu |\xi|^2 t} e^{-\frac{|y+z|^2}{4\nu t}} e^{-\nu b^2 t}
    \comma \lambda\in \Gamma_{-} \cup \Gamma_{+}
    ,
   \llabel{EQ137}
     \end{equation}
which gives 
  \begin{align}
    \begin{split}
   \left| \int_{\Gamma_{-}\cup \Gamma_{+}} e^{\lambda t} R_{\xi}(\lambda,y,z) \,d\lambda \right | \leq  \int_{\Gamma_{-}\cup \Gamma_{+}}  e^{-\frac{1}{2}\nu |\xi|^2 t} e^{-\frac{|y+z|^2}{4\nu t}} e^{-\nu b^2 t} \left|\frac{\mu+ |\xi|}{ \mu  \lambda} |\xi| \right| |d \lambda|
   .
   \end{split}
   \llabel{EQ138}
\end{align}
Since on $\Gamma_{-}\cup\Gamma_{+}$, we have $\lambda= -\frac{1}{2}\nu |\xi|^2 + \nu (a^2-b^2) + 2 \nu i ab \pm i M$, we obtain
  \begin{equation}
   d \lambda = -2\nu i (a+ bi) \,db
   ,
   \llabel{EQ139}
     \end{equation}
and thus
  \begin{equation}
 \frac{\mu+ |\xi|}{ \mu  \lambda } \,d\lambda= \pm \frac{2i (a+bi)}{\mu(\mu- |\xi|)} \,db.
   \llabel{EQ140}
     \end{equation}
Since $\mu^2= \lambda/\nu + |\xi|^2$ on $\Gamma_{-}\cup\Gamma_{+}$, we get
\begin{equation}
\mu^2 = \frac{1}{2} |\xi|^2 + (a+ ib)^2 \pm i M \nu^{-1}
   ,
   \llabel{EQ141}
     \end{equation}
which implies 
\begin{equation}
|a+ib|^2 \lesssim |\mu|^2 + |\xi|^2 + \nu^{-1}  \lesssim |\mu|^2 
   .
   \llabel{EQ142}
     \end{equation}
Therefore, we obtain that
\begin{align}
  \begin{split}
\left| \int_{\Gamma_{-}\cup\Gamma_{+}} e^{\lambda t} R_{\xi}(\lambda ,y,z) \,d\lambda \right| \lesssim&  \int_{\mathbb{R}}  e^{-\frac{1}{2}\nu |\xi|^2 t} e^{-\frac{|y+z|^2}{4\nu t}} e^{-\nu b^2 t} \frac{|\xi|\cdot|a+bi|}{|\mu(\mu-|\xi|)|} d b\\
\lesssim&  \int_{\mathbb{R}}  e^{-\frac{1}{2}\nu |\xi|^2 t} e^{-\frac{|y+z|^2}{4\nu t}} e^{-\nu b^2 t} \frac{|\xi|\cdot|\mu|}{|\mu (\mu-|\xi|)|} d b
  .
  \end{split}
   \llabel{EQ143}
\end{align}
Let $\nu |\xi|^2 = r^{-1}$, and note that $1 \le r$. Since $ 1 \lesssim |\lambda|$ on $\Gamma_{-}\cup\Gamma_{+}$, we may write
\begin{equation}
\frac{|\xi|}{|\mu-|\xi||} \leq \frac{1}{|\sqrt{1+ r \lambda}-1|} \lesssim \frac{1+ \sqrt{r |\lambda|}}{r|\lambda|} \lesssim 1.
   \llabel{EQ144}
     \end{equation}
Hence,
\begin{align}
\left| \int_{\Gamma_{-}\cup\Gamma_{+}} e^{\lambda t} R_{\xi}(\lambda ,y,z) \,d\lambda \right| \lesssim&  \int_{\Gamma_{-}\cup\Gamma_{+}}  e^{-\frac{1}{2}\nu |\xi|^2 t} e^{-\frac{|y+z|^2}{4\nu t}} e^{-\nu b^2 t}   d b \lesssim (\nu t)^{-1/2}  e^{-\frac{1}{2}\nu |\xi|^2 t} e^{-\frac{|y+z|^2}{4\nu t}}.
   \llabel{EQ145}
   \end{align}
For the derivatives, we have
\begin{equation}
\left|\partial_z^k \int_{\Gammac} e^{\lambda t} R_{\xi}(\lambda ,y,z) \,d\lambda \right|  \lesssim \int_{\Gammac} \mu^k e^{-\theta_0 \mu_0 |y+z|} \left| \frac{\mu+ |\xi|}{ \mu  }\right| |\xi| \,|d\lambda|
   .
   \llabel{EQ146}
   \end{equation}
Since $\mu \lesssim \nu^{-1/2}\lesssim \mu_0$, under the condition $|\xi|^2 \nu \leq 1$, we may bound
   \begin{equation}
   \left|\int_{\Gammac} e^{\lambda t} R_{\xi}(\lambda ,y,z) \,d\lambda \right|  \leq \mu_0^{k+1} e^{\theta_0 \mu_0 |y+z|}.
   \llabel{EQ147}
   \end{equation}
The derivative for the part of $\Gamma_{-}\cup\Gamma_{+}$ can be treated similarly as for~$\Gammac$.

We now address the case  $\nu |\xi|^2 \geq 1$, considering the contour 
  \begin{equation}
   \Gamma:= \left\{\lambda=  -\nu|\xi|^2 + \nu (\theta^2 a^2-b^2) + 2\nu i \theta a b,\ b \in \mathbb{R} \right\},
   \llabel{EQ148}
   \end{equation}
where $\theta=1$ when $a/|\xi|\in (-\infty, 1/2)\cup (3/2, \infty)$ and $\theta=1/2$ when $a/|\xi| \in (1/2,3/2)$. 
Note that we do not require that $0$ is on the left of the contour any more; we have
\begin{align}
  \begin{split}
  R_{\xi}(t,y;z) := R_\xi^{(1)}(t, y; z) + R_\xi^{(2)}(t, y ;z)
  = R_\xi^{(1)}(t, y; z) + \int_{\Gamma} e^{\lambda t}e^{-\mu |y+z|} \frac{\mu + |\xi|}{\mu \lambda} |\xi| d \lambda  
  ,
  \end{split}
   \llabel{EQ149}
  \end{align}
where $R_\xi^{(1)}(t, y; z)$, for the case
$\nu|\xi|^{2}> 1$,
is
defined as
the residue of $e^{\lambda t} R_{\xi}(\lambda ,y,z)$ at the pole $\lambda=0$ if $0$ is on the right of the contour $\Gamma$ and $0$ otherwise.
On $\Gamma$, we may write
  \begin{equation}
   \mu = \theta a + ib
   \aand
   \lambda = \nu (\theta a +ib + |\xi|)  (\theta a +ib - |\xi|) 
   \llabel{EQ150}
   \end{equation}
and
\begin{equation}
\begin{aligned}
\left|\frac{\mu + |\xi|}{\mu \lambda} \right| |\xi| |d \lambda|
=&\left| \frac{ \theta a + ib + |\xi|}{ (\theta a + ib) \nu (\theta a +ib + |\xi|)  (\theta a +ib - |\xi|) } |\xi| 2 \nu( \theta a  + ib ) \right| |d b|\\
=& \frac{|\xi|}{|\theta a +i b - |\xi||}  |d b| \lesssim  |d b|.
\end{aligned}
   \llabel{EQ151}
     \end{equation}
Therefore, 
\begin{equation}
|R_\xi^{(2)}(t, y; z) | \leq \left|\int_{\Gamma} e^{\lambda t}e^{-\mu |y+z|} \frac{\mu + |\xi|}{\mu \lambda} |\xi| d \lambda\right| \leq C_0 e^{-\nu |\xi|^2t} e^{-\frac{|y+z|^2}{4 \nu t}} \int_{-\infty}^{\infty} e^{-\nu b^2 t } \, db.
   \llabel{EQ152}
     \end{equation}
When $\nu |\xi|^2 \geq 1$, a direct calculation gives
\begin{equation}
|R_\xi^{(1)}(t, y; z)|\leq 
2|\xi|e^{-|\xi||y+z|} \leq 2 \mu_0 e^{-\frac{1}{2}\mu_0|y+z|},
   \llabel{EQ153}
     \end{equation}
proving \eqref{EQ128} and~\eqref{EQ129} for~$k=0$.
The estimates for $k>0$ follows similarly to the previous case.
\end{proof}

\subsection{Duhamel's principle}
Finally, we address Duhamel's principle for the Stokes problem. We consider the equation 
\begin{subequations}
  \label{EQ154}
  \begin{align}
    \partial_{t}\omega_\xi - \nu\Delta_\xi\omega_\xi &= f\\
    -\nu(\partial_{z}+|\xi|)\omega_{\tau, \xi}|_{z=0} + \nu \xi|\xi|^{-1}\xi\cdot \omega_{\tau, \xi}|_{z=0}&= g  \\
    \omega_{3,\xi}|_{z=0} & = 0
    ,
  \end{align}
\end{subequations}
with $f,g$ sufficiently smooth,
and $\xi\in \mathbb{Z}^2$ is fixed.
From the discussion above, it is easy to check that Green's function for the equation has the form
  \begin{align}
  G_{\xi}(t,y;z)=
  \begin{pmatrix}
  G_{\xi,\tau}(t,y;z) &0\\
  0&G_{\xi,3}(t,y;z)
  \end{pmatrix},
   \llabel{EQ155}
  \end{align}
where
  \begin{align}
   G_{\xi,\tau}(t,y;z)=  \frac{1}{\sqrt{\nu t}}\left(e^{-\frac{|z-y|^2}{4\nu t}} + e^{-\frac{|z+y|^2}{4\nu t}} \right)e^{-\nu|\xi|^2t} I_2+  \int_{\Gamma} e^{\lambda t} \frac{\mu+ |\xi|}{ \mu  \lambda |\xi|} 
\begin{pmatrix}
\xi_2^2 & -\xi_1\xi_2\\
-\xi_1\xi_2 & \xi_1^2
\end{pmatrix}
 e^{-\mu|y+z|}\,d\lambda
   .
   \llabel{EQ156}
  \end{align}
Due to the Dirichlet boundary condition, there holds
  \begin{align}
   G_{\xi,3}(t,y;z)= \frac{1}{\sqrt{\nu t}}\left(e^{-\frac{|z-y|^2}{4\nu t}}- e^{-\frac{|z+y|^2}{4\nu t}} \right)e^{-\nu|\xi|^2t} . 
   \llabel{EQ157}
  \end{align}

Now we are ready to establish Duhamel's formula.

\cole
\begin{theorem}
\label{T4}
For any $T>0$, and for any $f \in L^{\infty}(0,T; L^2(\mathbb{R}_+))$ and $g \in L^{\infty}(0,T)$, the unique solution
in $L^{2}([0,T],H^{1}(\mathbb{R}_{+}))\cap L^{\infty}([0,T],L^{2}(\mathbb{R}_{+})) $
to the linear Stokes equation \eqref{EQ154}, with the initial data $\omega_{0,\xi}(z)$ in $L^1(\mathbb{R}_+)$, has the  representation
  \begin{align}
    \begin{split}
    \omega_{\xi}(t,y)= \int_0^{\infty}& G_{\xi}(t,y;z)\omega_{0,\xi}(z) \,dz + \int_0^t \int_0^{\infty} G_{\xi}(t-s,y;z) f(s,z) \,dz dt\\
    &+ \int_0^t G_{\xi}(s,y;z)|_{z=0} \begin{pmatrix} g(s) \\ 0\end{pmatrix}\,ds .
  \end{split}
   \label{EQ171}
  \end{align}
\end{theorem}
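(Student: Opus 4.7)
The plan is to prove Theorem~\ref{T4} by separating uniqueness from the verification of the representation, and then checking, term by term, that the three integrals on the right of \eqref{EQ171} produce the prescribed initial datum, force, and boundary forcing.

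Uniqueness is immediate from Lemma~\ref{L03}: two solutions with the prescribed regularity have a difference that solves the homogeneous version of \eqref{EQ154} with zero data, and the energy argument in Lemma~\ref{L03} forces this difference to vanish on $[0,T]$.

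For the representation, I would write $\omega_\xi = \omega_\xi^{\rm I} + \omega_\xi^{\rm F} + \omega_\xi^{\rm B}$, where
\begin{equation*}
\omega_\xi^{\rm I}(t,y)=\int_0^\infty G_\xi(t,y;z)\omega_{0,\xi}(z)\,dz,\quad
\omega_\xi^{\rm F}(t,y)=\int_0^t\!\!\int_0^\infty G_\xi(t-s,y;z) f(s,z)\,dz\,ds,
\end{equation*}
and $\omega_\xi^{\rm B}$ is the boundary integral in \eqref{EQ171}. For $\omega_\xi^{\rm I}$, the equations \eqref{EQ109}--\eqref{EQ111} of Theorem~\ref{T2} directly yield that it solves \eqref{EQ154} with $f=0$, $g=0$, and initial value $\omega_{0,\xi}$; the bounds in Theorem~\ref{T3}, together with $\omega_{0,\xi}\in L^1(\mathbb{R}_+)$, justify differentiation under the integral. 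For $\omega_\xi^{\rm F}$, the standard Duhamel differentiation applies: the $\delta$-initial condition $G_\xi(0,y;z)=\delta_y(z)I_2$ at the upper endpoint $s=t$ produces the boundary term $f(t,y)$, while the remaining $\partial_t$ under the integral combines with $\nu\Delta_\xi$ to vanish by \eqref{EQ109}; the homogeneous boundary condition is inherited from \eqref{EQ110} pointwise in $s$, and $\omega_\xi^{\rm F}|_{t=0}=0$ trivially.

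The main obstacle is the boundary-lifting term $\omega_\xi^{\rm B}$. One must show it contributes zero to the equation and to the initial datum, while reproducing $g$ under the boundary operator $B_\xi=-\nu(\partial_z+|\xi|)+\nu\xi|\xi|^{-1}\xi\cdot$. The plan is to exploit the decomposition $G_\xi=H_\xi+R_\xi$ from Theorem~\ref{T2}: the restriction $H_\xi(s,y;z)|_{z=0}$ is an explicit Gaussian whose behavior as $s\to 0^+$ furnishes an approximate identity in time, while $R_\xi(s,y;z)|_{z=0}$ is smooth with the bounds of Theorem~\ref{T3}. Applying $B_\xi$ in the $y$-variable and using the identity $B_\xi G_\xi|_{z=0}=0$ (from \eqref{EQ110}) transforms the computation into one of matching boundary traces and jump relations; the singular contribution of $H_\xi$ at the endpoint $s=t$ (amplified by the factor $1/\sqrt{\nu s}$ against a Gaussian concentrating at $y=0$) should reproduce $g(t)$ exactly, in the spirit of classical single-layer potential representations. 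Finally, the bounds from Theorem~\ref{T3} place $\omega_\xi$ in $L^\infty([0,T], L^2(\mathbb{R}_+))\cap L^2([0,T], H^1(\mathbb{R}_+))$, completing the verification.
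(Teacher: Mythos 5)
The paper does not supply a proof of Theorem~\ref{T4}: it states only that the result follows by standard methods. So your argument cannot be compared to a reference proof, and the question is whether it closes on its own terms.

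The overall structure---uniqueness from Lemma~\ref{L03}, then term-by-term verification of $\omega_\xi^{\rm I}$, $\omega_\xi^{\rm F}$, $\omega_\xi^{\rm B}$---is a sound Duhamel-type plan, and the treatments of $\omega_\xi^{\rm I}$ and $\omega_\xi^{\rm F}$ are essentially routine, though you should explicitly invoke the symmetry of the resolvent/semigroup (Lemma~\ref{L04}) to pass the boundary condition \eqref{EQ110}, which is stated in the $z$ variable of $G_\xi(t,y;z)$, over to the $y$ variable in which $\omega_\xi^{\rm I}$ and $\omega_\xi^{\rm F}$ are functions. The genuine gap is in $\omega_\xi^{\rm B}$. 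You assert that the singular endpoint ``should reproduce $g(t)$ exactly, in the spirit of classical single-layer potential representations,'' but this jump relation \emph{is} the theorem: since $G_\xi$ satisfies the homogeneous Robin condition in both variables, naive differentiation under the $ds$ integral gives $B_y\omega_\xi^{\rm B}|_{y=0}=0$, not $g(t)$, and recovering $g(t)$ from the endpoint singularity is precisely the step that needs an $\varepsilon$-excision (or mollification) argument and is not supplied. There is also an internal inconsistency in your sketch: you locate the singular endpoint at $s=t$ but point to the factor $1/\sqrt{\nu s}$, which is singular at $s=0$; these are reconciled only in the convolutional form $G_\xi(t-s,y;0)g(s)\,ds$ (which is what Green's identity actually produces and is, I believe, the intended reading of \eqref{EQ171}).

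A cleaner route that bypasses the jump relation entirely is to \emph{derive} \eqref{EQ171} by Green's identity rather than verify it: test $\partial_s\omega_\xi-\nu\Delta_\xi\omega_\xi=f$ against $G_\xi(t-s,y;\cdot)$, integrate by parts in $s$ and twice in $z$, and use both boundary conditions (for $\omega_\xi$ and for $G_\xi$) together with the self-adjointness of the matrix $|\xi|I_2-\xi\otimes\xi/|\xi|$ so that the homogeneous boundary contributions cancel; the surviving boundary term is $\int_0^t G_\xi(t-s,y;0)g(s)\,ds$ and the initial/forcing integrals appear from the $s=0,t$ endpoints and the forcing. This gives the representation directly, fixes the time argument in the boundary integral, and reduces the verification to the regularity needed to justify the integrations by parts, which your Theorem~\ref{T3} bounds supply.
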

\colb

The proof of this theorem is obtained by standard methods, and thus
the proof is omitted.

As pointed out in the introduction, Green's function
representation
\eqref{EQ171}, together with \eqref{EQ199}--\eqref{EQ201} and
the upper bounds in Theorem~\ref{T3} have
as an immediate consequence the 3D analog of the result~\cite{KVW3},
which states that the inviscid limit holds, locally in time,
for the Navier-Stokes equations with data analytic close to the
boundary and Sobolev in the rest of the domain.
For the seminal work on the inviscid problem in domains with the
boundary, see \cite{K}; for various works on the Kato
criteria from \cite{K},
see \cite{CEIV,CKV,CLNV,CV,Ke,TW,Wa}.
For the results with analytic data analytic or analytic close to the boundary, see~\cite{BNNT,CS,FTZ,KLS,KVW3,KVW4,M2,SC1,SC2,WWZ}
for results on flows with symmetry, see \cite{LMN,LMPT,MT},
and for other results and a review~\cite{BW,GVMM,MM}.

\startnewsection{Green's function for general boundary condition}{sec05}
In this section, we compute Green's function for more general boundary conditions and then provide its upper bound.
To be specific, we consider the Stokes problem
 \begin{subequations}
  \label{EQ177}
  \begin{align}
    \partial_{t}\omega_\xi - \nu\Delta_\xi\omega_\xi &= N_\xi \llabel{EQ178}\\
    -\nu \partial_{z} \omega_{\tau,\xi}|_{z=0} + \nu D(\xi) \omega_{\tau, \xi}|_{z=0}&= -B_{\tau,\xi}
    \llabel{EQ179} \\
    \omega_{3,\xi}|_{z=0} & = 0,
    \llabel{EQ180}
    .
  \end{align}
 \end{subequations}

We assume that
$D(\xi)$ satisfies the following hypothesis.

\begin{Hypothesis}\label{H1}
{\rm
We assume that the matrix
$D(\xi)$ is
of the form
\begin{equation}
D(\xi)
=
\begin{pmatrix}
\a(\xi) & \g(\xi)\\
\g(\xi) & \b(\xi)
\end{pmatrix}
   \llabel{EQ181}
     \end{equation}
where the entries satisfy 
\begin{enumerate}
\item 
$\det{D}(\xi) =0$ \text{~and~}
\item
$ \a(\xi), b(\xi) \geq 0$ and $a(\xi) + \b(\xi)\le C_0|\xi|$, where $C_0>0$,
\end{enumerate}
for all $\xi\in\mathbb{Z}^2$.
}
\end{Hypothesis}

\subsection{Analytic semigroup}
We first consider the realization of the Laplace operator $\nu \Delta$ with a more general boundary condition.
\begin{definition}
Let $A_{\xi}$ be the realization of the Laplace operator $ \Delta_{\xi}$ with the boundary condition
  \begin{equation}
  - \partial_{z}u + D(\xi) u|_{z=0}= 0,
  \llabel{EQ182}
  \end{equation}
in $(L^2(\mathbb{R}_+))^2$. We define the domain of $A_{\xi}$ by
\begin{equation}
D(A_{\xi}):= \left\{u \in \left(H^2(\mathbb{R}_+)\right)^2 \text{and} - \partial_{z}u + D(\xi) u|_{z=0}= 0\right\}.
   \llabel{EQ183}
     \end{equation}
Similarly to Section~\ref{sec03}, we have the following resolvent estimates.
\end{definition}

\cole
\begin{theorem}
\label{T5}
Assuming that Hypothesis~\ref{H1} holds, the operator $A_{\xi}$ is sectorial. The resolvent set $\rho(A_{\xi})$ of $A_{\xi}$,
   contains $\mathbb{C} \setminus ((-\infty , -\nu |\xi|^2] \cup \{\nu((\alpha+\beta)^2-|\xi|^2)\})$. Moreover, we have
  \begin{align}\label{EQ184}
    \|R(\lambda; A_{\xi})\|_{L^2 \to L^2}  \lesssim_\theta \sqrt{2}{|\lambda+ \nu |\xi|^2|^{-1}} 
  \end{align}
and
  \begin{align}\label{EQ185}
    \|R(\lambda; A_{\xi})\|_{L^2 \to H^1}  \lesssim_\theta \sqrt{2}\nu^{-1/2} {|\lambda+ \nu |\xi|^2|^{-1}} 
    ,
  \end{align}
for
  \begin{equation}
		\lambda \in S_{\theta, \xi, \alpha, \beta}= \{\lambda' \in \mathbb{C}: |\lambda'| \ge C\nu \max((\alpha+\beta)^2, |\xi|^2),\ |\arg\lambda'|<\theta\}
		,
   \llabel{EQ245}		
  \end{equation}
for any $\theta\in(0, \pi)$ and $C$ sufficiently large.
\end{theorem}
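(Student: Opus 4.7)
The plan is to mirror the proof of Theorem~\ref{T1}, solving the resolvent equation $(\lambda-\nu\Delta_\xi)u=f$ with boundary condition $-\gamma(\partial_z u)+D(\xi)\gamma(u)=0$ by writing $u=v+w$, where $v$ is built from the even extension $\bar f$ of $f$ via Fourier transform on the full line (as in \eqref{EQ58}), and $w=c_0 e^{-\mu z}$ is a homogeneous solution with $\mu=\nu^{-1/2}\sqrt{\lambda+\nu|\xi|^2}$ and $c_0\in\mathbb{C}^2$ chosen so that $u$ satisfies the boundary condition. Since $v$ is a convolution of two even functions, $\partial_z v(0)=0$, so the boundary condition for $w$ reduces to the algebraic system
\begin{equation*}
(\mu I + D(\xi))c_0 = -D(\xi)\,v(0).
\end{equation*}

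The first task is to invert $B:=\mu I + D(\xi)$. Under Hypothesis~\ref{H1}, writing $D=\begin{pmatrix}\alpha & \gamma\\\gamma & \beta\end{pmatrix}$ with $\gamma^2=\alpha\beta$, a direct computation gives
\begin{equation*}
\det B = \mu^2 + (\alpha+\beta)\mu = \mu(\mu+\alpha+\beta),
\end{equation*}
which vanishes exactly when $\lambda=-\nu|\xi|^2$ or $\lambda=\nu((\alpha+\beta)^2-|\xi|^2)$, matching the claimed exclusions from $\rho(A_\xi)$. The short way to obtain a usable inverse is to note that $D^2=(\alpha+\beta)D$ (using $\gamma^2=\alpha\beta$), from which $BD=(\mu+\alpha+\beta)D$, hence
\begin{equation*}
B^{-1}D(\xi) = \frac{1}{\mu+\alpha+\beta}\,D(\xi),
\qquad
c_0 = -\frac{1}{\mu+\alpha+\beta}D(\xi)\,v(0).
\end{equation*}
The identity $D^2=(\alpha+\beta)D$ also identifies $\|D(\xi)\|=\alpha+\beta\lesssim|\xi|$, since the nonzero eigenvalue of $D$ equals its trace.

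The $L^2$ and $H^1$ bounds on $v$ come from Plancherel's theorem exactly as in~\eqref{EQ77} and~\eqref{EQ79}. For $w$, using $\|e^{-\mu z}\|_{L^2}\sim(\re\mu)^{-1/2}$ and the trace bound $|v(0)|\lesssim (\nu\mu\sqrt{\mu})^{-1}\|f\|_{L^2}$ obtained as in the proof of Theorem~\ref{T1}, I would estimate
\begin{equation*}
\|w\|_{L^2}\lesssim \frac{\|D(\xi)\|}{|\mu+\alpha+\beta|}\cdot\frac{1}{\nu\mu^2}\,\|f\|_{L^2}
\lesssim \frac{|\xi|}{|\mu+\alpha+\beta|}\cdot\frac{1}{|\lambda+\nu|\xi|^2|}\,\|f\|_{L^2},
\end{equation*}
and similarly at the $H^1$ level with an extra factor of $|\mu|$.

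The main obstacle is controlling the denominator $|\mu+\alpha+\beta|$ on the sector $S_{\theta,\xi,\alpha,\beta}$. The crucial observation is that since $\alpha,\beta\ge 0$ are real, $|\mu+\alpha+\beta|\ge \re\mu+\alpha+\beta$. The condition $|\lambda|\ge C\nu\max((\alpha+\beta)^2,|\xi|^2)$ with $|\arg\lambda|<\theta<\pi$, together with $\nu\mu^2=\lambda+\nu|\xi|^2$ and the principal branch of the square root, forces $|\arg\mu|<\pi/2-\delta$ for some $\delta=\delta(\theta)>0$ once $C$ is sufficiently large; consequently $\re\mu\gtrsim|\mu|\gtrsim\max(|\xi|,\alpha+\beta)$. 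This yields $|\mu+\alpha+\beta|\gtrsim|\xi|$, which absorbs the factor of $|\xi|$ from $\|D(\xi)\|$ and delivers the desired bounds $\|w\|_{L^2}\lesssim|\lambda+\nu|\xi|^2|^{-1}\|f\|_{L^2}$ and the stated $H^1$ analog. Combining with the bounds on $v$ gives \eqref{EQ184} and \eqref{EQ185}, and sectoriality of $A_\xi$ is then an immediate consequence of the $L^2$ resolvent bound on the sector.
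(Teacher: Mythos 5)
There is a sign error in your boundary condition that, while traceable to an internal inconsistency in the paper, materially changes the structure of the problem. You take the boundary condition from the definition of $A_\xi$ in Section~5, namely $-\gamma(\partial_z u)+D(\xi)\gamma(u)=0$, which gives $B=\mu I+D(\xi)$ and $\det B=\mu(\mu+\alpha+\beta)$. However, the paper's proof (the resolvent system \eqref{EQ186}) uses $\gamma(\partial_z u)+D(\xi)\gamma(u)=0$, which gives $B=\mu I-D(\xi)$ and $\det B=\mu(\mu-(\alpha+\beta))$. This is the correct sign: it is the one that matches the concrete case from Section~3, where the vorticity boundary condition $-\nu(\partial_3+|\xi|)\omega+\nu\xi|\xi|^{-1}\xi\cdot\omega=0$ corresponds to $\partial_z u + Du=0$ with $D=|\xi|I-\xi\xi^{T}/|\xi|$, which has $\alpha=\xi_2^2/|\xi|\ge 0$, $\beta=\xi_1^2/|\xi|\ge 0$, $\gamma^2=\alpha\beta$, $\alpha+\beta=|\xi|$, so Hypothesis~\ref{H1} is satisfied. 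With your sign convention, this same example would have $\alpha,\beta\le 0$, violating the hypothesis.

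The sign matters for two reasons. First, your claim that $\det B=\mu(\mu+\alpha+\beta)$ ``vanishes exactly when $\lambda=-\nu|\xi|^2$ or $\lambda=\nu((\alpha+\beta)^2-|\xi|^2)$'' is false: on the principal branch $\re\mu>0$, so $\mu+(\alpha+\beta)$ never vanishes when $\alpha+\beta\ge0$. The excluded point $\nu((\alpha+\beta)^2-|\xi|^2)$ corresponds to $\mu=\alpha+\beta$, which is a genuine zero of $\mu-(\alpha+\beta)$ but not of $\mu+(\alpha+\beta)$; your version of the operator has no pole there and cannot reproduce the pole structure used later in Theorem~\ref{T6}. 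Second, your key lower bound is $|\mu+\alpha+\beta|\ge\re\mu+\alpha+\beta$, a direct triangle inequality that makes the denominator trivially bounded below. With the correct sign, you must instead bound $|\mu-(\alpha+\beta)|$ from below via the reverse triangle inequality $|\mu-(\alpha+\beta)|\ge|\mu|-(\alpha+\beta)$, and this is where the sector condition $|\lambda|\ge C\nu\max((\alpha+\beta)^2,|\xi|^2)$ with $C$ large is actually needed, to guarantee $|\mu|\ge 2(\alpha+\beta)$ (say) and hence $|\mu-(\alpha+\beta)|\ge|\mu|/2\gtrsim|\xi|$. The good news is that you do derive $|\mu|\gtrsim\max(|\xi|,\alpha+\beta)$ in your argument, which is exactly what is needed; after correcting the sign of $D(\xi)$ and replacing the triangle inequality by the reverse one, the rest of your estimates go through and the proof matches the paper's.
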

\colb

\begin{proof}[Proof of Theorem~\ref{T5}]
We consider the resolvent equation 
\begin{align}
  \begin{split}
(\lambda - \nu \Delta_{\xi}) u =&f
\\
\gamma(\partial_3 u + D(\xi) u) =&0
  ,
  \end{split}
   \label{EQ186}
\end{align}
with $\lambda \in \mathbb{R}$ and $f\in L^2(\mathbb{R}_+)$.
As in Section~\ref{sec03}, let $\mu=\nu^{-1/2} \sqrt{\lambda + |\xi|^2 \nu}$.
As in Section~\ref{S3.1}, the function $v$ defined by
\begin{align}
v(y) = \int_{0}^{\infty} \frac{1}{2\nu \mu}  \left(e^{- \mu |z-y|} + e^{- \mu |z+y|}\right) {f}(z) dz,
   \llabel{EQ187}
\end{align}
solves $(\lambda - \nu \Delta_{\xi}) v =f$, with the homogeneous Neumann boundary condition. To match the boundary condition, we
write $u=v+w$ where
  \begin{align}
  w(y)=\int_0^{\infty}M(z) f(z) e^{-\mu y}dz
    .
   \label{EQ188}
  \end{align}
The function $M\colon \mathbb{R}_+\to \mathbb{C}^{2 \times 2}$ is chosen so that
\begin{align}
  \begin{split}
  (\lambda - \nu \Delta_{\xi}) w &=0
\\
    \gamma ( \partial_{3}(w+v) + D(\xi)(w+v))&=0.
  \end{split}
   \label{EQ38}
\end{align}
Following the arguments in Sections~\ref{sec03} and~\ref{sec04}, we obtain
  \begin{equation}
   -\gamma(\partial_3 w + D(\xi)w)
   =( \mu - D(\xi))\int_0^{\infty}M(z) f(z) dz
   \llabel{EQ190}
     \end{equation}
and 
  \begin{equation}
  \gamma(\partial_3 v + D(\xi)v)
  = D(\xi)\int_0^{\infty} \frac{e^{-\mu z}}{ \nu \mu}  f(z) dz
  ,
   \llabel{EQ191}
     \end{equation}
which then leads to
  \begin{align}
  \begin{split}
   &
   \int_0^{\infty}M(z) f(z) dz
   +
   ( \mu - D(\xi))^{-1}D(\xi)\int_0^{\infty} \frac{e^{-\mu z}}{ \nu \mu}  f(z) dz
   = 0
   .
  \end{split}
   \label{EQ39}
  \end{align}
It is easy to check that when $\lambda \neq \nu((\a+\b)^2 - |\xi|^2)$, the matrix $\mu- D(\xi)$ is invertible, and we have
\begin{equation}
(\mu-D(\xi))^{-1} D(\xi)= \frac{1}{\mu^2 - (\a+\b) \mu}\begin{pmatrix} \mu-\b& \g\\ \g & \mu-\a \end{pmatrix} \begin{pmatrix} \a & \g \\ \g & \b  \end{pmatrix}.
   \llabel{EQ192}
     \end{equation}
Using $\det D(\xi) = \alpha \beta-\gamma^2 =0$, we obtain 
\begin{equation}
(\mu-D(\xi))^{-1} D(\xi)=  \frac{1}{\mu - (\a+\b) } D(\xi)
   .
   \label{EQ193}
     \end{equation}
Using \eqref{EQ193} in \eqref{EQ39},
together with
  \begin{align}
  \begin{split}
     (\lambda - \nu \Delta_{\xi})
     \int_0^{\infty}M(z) f(z) e^{-\mu y}dz
     = 0
   \inin{\mathbb{R}_+}
  ,
  \end{split}
   \label{EQ95}
  \end{align}
resulting from \eqref{EQ188} and \eqref{EQ38}$_1$,
we obtain, after some computation,
  \begin{equation}
   M(z)= \frac{e^{-\mu z}}{\nu \mu(\mu-(\a+\beta))} D(\xi)
   .
   \llabel{EQ194}
  \end{equation}
Thus, for any $f \in (L^2(\mathbb{R}_+))^2$ and
$\lambda \in \mathbb{C} \setminus \{\nu((\a+\b)^2 - |\xi|^2)\}$, we obtain an explicit solution to the system~\eqref{EQ186}, which reads
  \begin{equation}
    u(y)
    = \int_{0}^{\infty} \frac{1}{2\nu \mu}  \left(e^{- \mu |z-y|} + e^{- \mu |z+y|}\right) {f}(z) dz + \int_0^{\infty} \frac{e^{-\mu |y+z|}}{\nu \mu(\mu-(\a+\beta))} D(\xi) f(z) dz.  
   \llabel{EQ195}
     \end{equation}
To derive \eqref{EQ184}, we observe that instead of~\eqref{EQ231}
we have
  \begin{equation}
    \|w\|_{L^2} \leq \frac{C_0}{|\lambda+ \nu |\xi|^2||\sqrt{1+\lambda/\nu |\xi|^2}-(\a+\b)/|\xi||}\|f\|_{L^2}.
   \llabel{EQ246}
\end{equation}
Hence, as in Theorem~\ref{T1}, upon choosing our contour $S_{\theta,\omega,\a,\b}$ carefully so that the term $\sqrt{1+\lambda/\nu |\xi|^2}-(\a+\b)/|\xi|$ is bounded from below, there exists a constant $C_{\theta,\omega,\a,\b}$ such that \eqref{EQ184} holds. The proof of \eqref{EQ185} is similar to~\eqref{EQ79.1} in Theorem~\ref{T1}.
\end{proof}

\subsection{Green's function}
From the previous considerations, it is easy to check that Green's function for the system~\eqref{EQ177} takes the form
\begin{align}
G_{\xi}(t,y;z)=
\begin{pmatrix}
  G_{\xi,\tau}(t,y;z) &0\\
  0&G_{\xi,3}(t,y;z)
\end{pmatrix}
  ,
   \label{EQ199}
\end{align}
where
\begin{align}
  \begin{split}
G_{\xi,\tau}(t,y;z)=&  \frac{1}{\sqrt{\nu t}}\left(e^{-\frac{|z-y|^2}{4\nu t}} + e^{-\frac{|z+y|^2}{4\nu t}} \right)e^{-\nu|\xi|^2t} 
+  \int_{\Gamma} e^{\lambda t} e^{-\mu|y+z|} \frac{D(\xi)}{\nu \mu(\mu- (\a+\b))}\,d\lambda
\\
=: & H_{\xi,\tau}(t,y;z) + R_{\xi,\tau}(t,y;z)
  \end{split}
   \llabel{EQ200}
\end{align}
and 
\begin{align}
G_{\xi,3}(t,y;z)= \frac{1}{\sqrt{\nu t}}\left(e^{-\frac{|z-y|^2}{4\nu t}}- e^{-\frac{|z+y|^2}{4\nu t}} \right)e^{-\nu|\xi|^2t} . 
   \label{EQ201}
\end{align}
Similarly to Theorem~\ref{T3}, we have the following statement.

\cole
\begin{theorem}
\label{T6}
The residual kernel $R_{\xi,\tau}$
may be decomposed as 
  $ R_{\xi,\tau}= R^{(1)}_{\xi,\tau}+ R^{(2)}_{\xi,\tau}$,
with the two kernels satisfying the bounds
  \begin{equation}
    |\partial_{z}^{k}R^{(1)}_{\xi,\tau}(t, y, z)|
    \lesssim \mu_0^{k+1} e^{-\theta_0\mu_0^{}(y+z)} e^{ \nu((\alpha+\beta)^2-|\xi|^2)t}
    \comma k\in{\mathbb N}_0
    \label{EQ202}
  \end{equation}
  and
  \begin{equation}
    |\partial_{z}^{k} R^{(2)}_{\xi,\tau}(t, y,z)|
    \lesssim \frac{1}{(\nu t)^{(k+1)/2 }}
    e^{-\frac{(y+z)^2}{\nu t}}e^{-\frac{\nu|\xi|^2t}{8}}
    \comma k\in{\mathbb N}_0
    ,
    \label{EQ203}
  \end{equation}
where 
  $\theta_0>0$ is a constant and
  the boundary remainder coefficient 
  is given by $\mu_0=\mu_0(\xi, \nu)= |\xi|+\sqrt{\nu}^{-1}$.
The implicit constants in \eqref{EQ202} and \eqref{EQ203} depend on~$k$.
\end{theorem}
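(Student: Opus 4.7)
The plan is to reuse the proof of Theorem~\ref{T3} almost verbatim, with the only essential change being that the denominator $\mu-|\xi|$ in the residual kernel is replaced by $\mu-(\alpha+\beta)$. This moves the pole in the $\lambda$-plane from $\lambda=0$ to $\lambda_0:=\nu((\alpha+\beta)^2-|\xi|^2)$, and tracking that shift through the contour argument is precisely what produces the additional factor $e^{\nu((\alpha+\beta)^2-|\xi|^2)t}$ in~\eqref{EQ202}. Hypothesis~\ref{H1} gives $|D(\xi)|\leq\alpha+\beta\leq C_0|\xi|\leq C_0\mu_0$ and $|\lambda_0|\leq(1+C_0^2)\nu|\xi|^2$, so the pole lies in the same bounded region that $\lambda=0$ occupied before, and the contours from Theorem~\ref{T3} continue to work once they are enlarged slightly to keep $\lambda_0$ a uniform distance away.

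As before, split into the low-frequency regime $\nu|\xi|^2\le 1$ and the high-frequency regime $\nu|\xi|^2>1$. In the low-frequency case, choose $\Gamma=\Gamma_-\cup\Gamma_c\cup\Gamma_+$ exactly as in Theorem~\ref{T3}, enlarging $M$ so that $\lambda_0$ lies strictly inside $\Gamma_c$, and define $R^{(1)}_{\xi,\tau}$ to be the integral over $\Gamma_c$ and $R^{(2)}_{\xi,\tau}$ the integral over the two tails. In the high-frequency case, use the same parabolic contour as in Theorem~\ref{T3}, take $R^{(1)}_{\xi,\tau}$ to be the residue of the integrand at $\lambda_0$ whenever that point lies to the right of $\Gamma$ (and zero otherwise), and let $R^{(2)}_{\xi,\tau}$ be the integral over the contour itself.

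The estimate for $R^{(2)}_{\xi,\tau}$ is a direct transcription of the proof of Theorem~\ref{T3}: the bound $|e^{\lambda t}e^{-\mu|y+z|}|\le e^{-\nu|\xi|^2 t/2}e^{-|y+z|^2/(4\nu t)}e^{-\nu b^2 t}$ is unchanged, and $|D(\xi)|/|\mu-(\alpha+\beta)|\lesssim 1$ plays the role of $|\xi|/|\mu-|\xi||\lesssim 1$ from before. Here the ratio is controlled by writing $\lambda-\lambda_0=\nu(\mu-(\alpha+\beta))(\mu+(\alpha+\beta))$ and using that $|\lambda-\lambda_0|\gtrsim 1$ on the tails, exactly as in the argument following~\eqref{EQ144}. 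Differentiation in $z$ produces the factor $\mu^k$ under the integral; the remaining integration in $b$ yields~\eqref{EQ203}.

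For $R^{(1)}_{\xi,\tau}$ in the low-frequency case, the inequality $\re\mu\ge\theta_0\mu_0+a/2$ on $\Gamma_c$ gives the decay $e^{-\theta_0\mu_0(y+z)}$, while $|D(\xi)/(\nu\mu(\mu-(\alpha+\beta)))|\lesssim\mu_0$ (using $|D(\xi)|\lesssim\mu_0$, $|\mu|\gtrsim\mu_0$, and the bounded separation between $\Gamma_c$ and $\lambda_0$) supplies the prefactor $\mu_0^{k+1}$ after $k$-fold differentiation; the harmless $e^{Mt}$ from the size of $\Gamma_c$ is absorbed into the implicit constant and the monotone factor $e^{\nu((\alpha+\beta)^2-|\xi|^2)t}$. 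In the high-frequency case, using $\lambda-\lambda_0=\nu(\mu-(\alpha+\beta))(\mu+(\alpha+\beta))$ the residue computation yields a constant multiple of $D(\xi)e^{\lambda_0 t}e^{-(\alpha+\beta)(y+z)}$; because $|D(\xi)|\leq\alpha+\beta\leq C_0\mu_0$ and $\alpha+\beta\ge 0$, this is controlled by $\mu_0\,e^{\nu((\alpha+\beta)^2-|\xi|^2)t}e^{-\theta_0\mu_0(y+z)}$ after possibly shrinking $\theta_0$, and $k$-fold $z$-derivatives supply the additional $\mu_0^k$.

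The main obstacle, in my view, is verifying the uniform lower bound on $|\mu-(\alpha+\beta)|$ along whichever contour is used, since unlike the concrete pole $\mu=|\xi|$ in Theorem~\ref{T3}, the quantity $\alpha+\beta$ is only known to satisfy $0\le\alpha+\beta\le C_0|\xi|$. The fix is to imitate the two-case contour choice from Theorem~\ref{T3} (with $\theta=1$ versus $\theta=1/2$) but now split according to $a/(\alpha+\beta)$ rather than $a/|\xi|$, ensuring that the parabolic contour is bounded uniformly away from $\mu=\alpha+\beta$; the residue contribution absorbs whatever singularity the contour could not be deformed across, which is exactly the content of the factor $e^{\nu((\alpha+\beta)^2-|\xi|^2)t}$.
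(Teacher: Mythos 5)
Your proposal follows the paper's proof of Theorem~\ref{T6} essentially verbatim: the same contour decomposition in the two regimes $\nu|\xi|^2\le 1$ and $\nu|\xi|^2>1$ (with $M$ enlarged in the low-frequency case so that the shifted pole $\lambda_0=\nu((\alpha+\beta)^2-|\xi|^2)$ stays to the left of the tails), the same split into $R^{(1)}_{\xi,\tau}$ (arc, resp.\ residue) and $R^{(2)}_{\xi,\tau}$ (tails, resp.\ full parabola), the same ratio bound with $|D(\xi)|/|\mu-(\alpha+\beta)|$ replacing $|\xi|/|\mu-|\xi||$, and the same $\theta\in\{1,1/2\}$ switch of the parabola governed by $a/(\alpha+\beta)$. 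One small point where your computation and the paper diverge: at the pole one has $\mu=\alpha+\beta$, so the residue decays as $e^{-(\alpha+\beta)(y+z)}$ as you write, whereas the paper records $e^{-|\xi|(y+z)}$; your version is the arithmetically correct one. This does mean that the final comparison of the residue to $\mu_0 e^{-\theta_0\mu_0(y+z)}$ is not merely a matter of ``shrinking $\theta_0$'' when $\alpha+\beta$ is much smaller than $\mu_0$ — one needs to use that the residue is only picked up when $a\lesssim\alpha+\beta$ (so $y+z\lesssim\nu t(\alpha+\beta)$) and that the prefactor $|D(\xi)|\sim\alpha+\beta$ vanishes with $\alpha+\beta$ — but the paper's own write-up elides this in exactly the same way, so this is not a gap in your argument relative to the source.
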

\colb

\begin{remark}
Note that this result should be seen as a generalization of Theorem~\ref{T3}. In fact, if we further assume that $\alpha+\beta\le |\xi|$, then the 
two results 
coincide. 
\end{remark}

\begin{proof}[Proof of Theorem~\ref{T6}]
Since
\begin{equation}
\frac{1}{\nu \mu(\mu- (\a+\b))} = \frac{\mu + (\a+\b)}{\mu \left(\lambda + \nu (|\xi|^2- (\a+\b)^2)\right)}
   ,
   \llabel{EQ204}
     \end{equation}
we have
\begin{equation}
R_{\xi,\tau}(t,y;z):=  \int_{\Gamma} e^{\lambda t} e^{-\mu|y+z|}   \frac{\mu + \a+\b}{\mu \left(\lambda + \nu (|\xi|^2- (\a+\b)^2)\right)} D(\xi)\,d\lambda.
   \llabel{EQ205}
     \end{equation}
By Hypothesis~\ref{H1}, there holds 
\begin{equation}
|D(\xi)| \lesssim |\xi|.
   \llabel{EQ206}
     \end{equation}
We first consider the case when $\nu |\xi|^2 \leq 1 $ and decompose the contour as $\Gamma:= \Gamma_{-}\cup\Gamma_{+}\cup \Gammac$  with
  \begin{align}
    \begin{split}
    &
    \Gamma_{-}:= \left\{ \lambda= -\frac{1}{2}\nu |\xi|^2 + \nu (a^2-b^2) + 2 \nu i ab - i M,\ - b \in  [0,\infty) \right\},
    \\&
  \Gammac:= \left\{ \lambda =-\frac{1}{2}\nu|\xi|^2 + \nu a^2 + Me^{i\theta}, \ \theta\in [-\pi/2, \pi/2]  \right\},
    \\&
    \Gamma_{+}:= \left\{ \lambda= -\frac{1}{2}\nu |\xi|^2 + \nu (a^2-b^2) + 2 \nu i ab + i M,\ + b \in [0,\infty) \right\},
    \end{split}
   \llabel{EQ207}
     \end{align}
for some positive number $M$ large enough so that a ball $B$ centered at $\nu((\alpha+\beta)^2-|\xi|^2)$ with radius $2C_0$ is on the left-hand side of the contour $\Gamma$ , where $C_0>0$ is as in Hypothesis~\ref{H1} and
  \begin{equation}
   a=\frac{|y+z|}{2\nu t}
   .
   \label{EQ244}
  \end{equation}
Similarly to the proof of Theorem~\ref{T3}, we have
  \begin{equation}
  \re \mu \geq \theta_0 \mu_0 + \frac{a}{2}.
   \llabel{EQ209}
    \end{equation}
Recalling 
  \begin{align}
    \begin{split}
    R_{\xi}( \lambda ,y,z) =  e^{-\mu|y+z|}  \frac{\mu + \alpha +\beta}{ \mu(\lambda + \nu(|\xi|^2- (\a+\b)^2))} D(\xi),
  \end{split}
   \llabel{EQ210}
  \end{align}
we obtain
  \begin{equation}
  \left| R^{(1)}_\xi(t, y, z)\right|= \left|\int_{\Gammac} e^{\lambda t} R_{\xi}( \lambda ,y,z) \,d\lambda \right  |  
  \lesssim \int_{\Gammac}  e^{Mt}e^{\nu a^2 t}e^{-\frac{a}{2}|y+z|} e^{-\theta_0 \mu_0 |y+z|} \frac{|\mu|+ |\xi|}{ |\mu|  } |\xi| d \lambda
   \llabel{EQ211}
     \end{equation}
  where we used that $\nu|\xi|^2 \le 1$.
  Hence, we get 
  \begin{equation}
  \left|\int_{\Gammac} e^{\lambda t} R_{\xi}(t,y,z) \,d\lambda \right |  \lesssim \mu_0 e^{\theta_0 \mu_0 |y+z|}e^{\nu((\alpha+\beta)^2-|\xi|^2)t} .
   \llabel{EQ212}
     \end{equation}
  For the integral on $\Gamma_{-}\cup\Gamma_{+}$, we have
  \begin{equation}
  \left| e^{\lambda t} e^{-\mu|y+z|} \right| \leq e^{-\frac{1}{2}\nu |\xi|^2 t} e^{-\frac{|y+z|^2}{4\nu t}} e^{-\nu b^2 t},
   \llabel{EQ213}
     \end{equation}
which gives 
  \begin{align}
    \begin{split}
    \left| \int_{\Gamma_{-}\cup\Gamma_{+}} e^{\lambda t} R_{\xi}(\lambda,y,z) \,d\lambda \right | \leq  \int_{\Gamma_{-}\cup\Gamma_{+}}  e^{-\frac{1}{2}\nu |\xi|^2 t} e^{-\frac{|y+z|^2}{4\nu t}} e^{-\nu b^2 t} \left| \frac{\mu + \alpha +\beta}{ \mu(\lambda + \nu(|\xi|^2- (\a+\b)^2))}\right| |\xi|d \lambda.
  \end{split}
   \llabel{EQ214}
  \end{align}
Since $\lambda= -\frac{1}{2}\nu |\xi|^2 + \nu (a^2-b^2) + 2 \nu i ab \pm i M$ on $\Gamma_{-}\cup\Gamma_{+}$, we have
  \begin{equation}
  d \lambda = -2\nu i (a+ bi) db.
   \llabel{EQ215}
     \end{equation}
Similarly to the proof of Theorem~\ref{T3}, we have $|a+ib|  \lesssim |\mu|$. Therefore, we may write
  \begin{align}
    \begin{split}
    \left| \int_{\Gamma_{-}\cup\Gamma_{+}} e^{\lambda t} R_{\xi}(\lambda ,y,z) \,d\lambda \right| \lesssim&  \int_{-\infty}^\infty  e^{-\frac{1}{2}\nu |\xi|^2 t} e^{-\frac{|y+z|^2}{4\nu t}} e^{-\nu b^2 t} \frac{|\mu +\alpha + \beta| |\xi|}{|\mu^2-(\alpha + \beta )^2|} d b\\
    \lesssim&  \int_{-\infty}^\infty  e^{-\frac{1}{2}\nu |\xi|^2 t} e^{-\frac{|y+z|^2}{4\nu t}} e^{-\nu b^2 t} 
    \frac{|\xi|}{|\mu -\alpha-\beta|} d b
   .
  \end{split}
   \llabel{EQ216}
  \end{align}
Since $ 2C_0 \le |\lambda|$ on $\Gamma_{-}\cup\Gamma_{+}$, we have
  \begin{equation}
  \frac{|\xi|}{|\mu-|\alpha+\beta||} \le \frac{1}{|\sqrt{1+ \paren{\nu |\xi|^2}^{-1} \lambda}-C_0|} \lesssim  1.
   \llabel{EQ217}
     \end{equation}
Hence,
  \begin{align}
    \left| \int_{\Gamma_{-}\cup\Gamma_{+}} e^{\lambda t} R_{\xi}(\lambda ,y,z) \,d\lambda \right| \lesssim
    &  \int_{-\infty}^{\infty}  e^{-\frac{1}{2}\nu |\xi|^2 t} e^{-\frac{|y+z|^2}{4\nu t}} e^{-\nu b^2 t}\,   d b \lesssim (\nu t)^{-1/2}  e^{-\frac{1}{2}\nu |\xi|^2 t} e^{-\frac{|y+z|^2}{4\nu t}}.
   \llabel{EQ218}
  \end{align}
The proof for the derivatives is the same as in the proof of Theorem~\ref{T3}. 
  
We now consider the case when $\nu |\xi|^2 \geq 1$ using a contour 
  \begin{equation}
  \Gamma:= \left\{\lambda=  -\nu|\xi|^2 + \nu (\theta^2 a^2-b^2) + 2\nu i \theta a b,\ b \in \mathbb{R} \right\},
   \llabel{EQ219}
     \end{equation}
similar to the one in the proof of Theorem~\ref{T3},
where $\theta=1$ when $a/(\a+\b)\in (-\infty, 1/2)\cup (3/2, \infty)$ and $\theta=1/2$ when $a/(\a+\b) \in (1/2,3/2)$,
where $a$ is defined in \eqref{EQ244}.
Then
  \begin{align}
    \begin{split}
    R_{\xi}(t,y;z) :&= R_\xi^{(1)}(t, y; z) + R_\xi^{(2)}(t, y ;z)
    \\&= R_\xi^{(1)}(t, y; z) + \int_{\Gamma} e^{\lambda t}e^{-\mu |y+z|}
    \frac{\mu + \alpha +\beta}{ \mu(\lambda + \nu(|\xi|^2- (\a+\b)^2))} D(\xi)d \lambda  
  \end{split}
   \llabel{EQ220}
  \end{align}
  where $R_\xi^{(2)}(t, y; z)$ is the residue of $e^{\lambda t} R_{\xi}(\lambda ,y,z)$ at the pole $\lambda= \nu(-|\xi|^2+ (\a+\b)^2)$ if $\nu(-|\xi|^2+ (\a+\b)^2)$ is on the right of the contour $\Gamma$ and $0$ otherwise.
  On $\Gamma$, we have
  \begin{equation}
  \mu = \theta a + ib,\ \lambda = \nu (\theta a +ib + |\xi|)  (\theta a +ib - |\xi|) 
   \llabel{EQ221}
     \end{equation}
  and
  \begin{equation}
  \begin{aligned}
    \left|\frac{\mu + \a +\b}{\mu (\lambda+ \nu(|\xi|^2-(\a+\b)^2))} \right|  |d \lambda|
    =&\left| \frac{ \theta a + ib + \a+ \b}{ (\theta a + ib) \nu (\theta a +ib + \a +\b)  (\theta a +ib - \a-\b) }  2 \nu( \theta a  + ib ) \right| |d b|\\
    =& \frac{1}{|\theta a +i b - \a-\b|}  |d b| \lesssim  |d b|.
  \end{aligned}
   \llabel{EQ222}
     \end{equation}
Therefore, 
  \begin{align}
    \begin{split}
  |R_\xi^{(2)}(t, y; z) |
    &\leq \int_{\Gamma} e^{\lambda t}e^{-\mu |y+z|} \left| \frac{\mu + \a +\b}{\mu (\lambda+ \nu(|\xi|^2-(\a+\b)^2))} \right| |D(\xi)| |d \lambda|
  \\&
  \leq C_0 e^{-\nu |\xi|^2t} e^{-\frac{|y+z|^2}{4 \nu t}} \int_{\mathbb{R}} e^{-\nu b^2 t } db.
  \end{split}
   \llabel{EQ223}
     \end{align}
  When $\nu |\xi|^2 \geq 1$, a direct calculation gives
  \begin{equation}
  |R_\xi^{(1)}(t, y; z)|\leq e^{ \nu((\alpha+\beta)^2-|\xi|^2)t}
  2|\xi|e^{-|\xi||y+z|} \leq 2 \mu_0 e^{-\frac{1}{2}\mu_0|y+z|}e^{ \nu((\alpha+\beta)^2-|\xi|^2)t},
   \llabel{EQ224}
     \end{equation}
proving \eqref{EQ202} and \eqref{EQ203} for~$k=0$.
The estimates for $k>0$ follows similarly as in the previous case,
concluding the proof.
\end{proof}


%
%

\section{Acknowledgment}
IK was supported in part by the NSF grant DMS-2205493,
while FW was supported in part by the National Natural Science Foundation of China (No.~12101396, 12331008, and 12161141004).

\end{document}